\newif\ifpgf
\documentclass[leqno,onefignum,onetabnum,onethmnum]{siamltex1213} \pgftrue% modified [final,
%% NEEDS LATEX->PS->PDF
%\documentclass[leqno,onefignum,onetabnum,onethmnum]{siamltex1213mod} \pgftrue% modified [final,

\title{The Scaling, Splitting and Squaring Method for the Exponential of Perturbed Matrices}

\author{
Philipp Bader\footnote{Instituto de Matem\'atica Multidisciplinar,
  Universitat Polit\`ecnica de Val\`{e}ncia, E-46022  Valencia, Spain.
 {\tt phiba@imm.upv.es}}, 
Sergio Blanes\footnote{Instituto de Matem\'atica Multidisciplinar,
  Universitat Polit\`ecnica de Val\`{e}ncia, E-46022  Valencia, Spain.
 {\tt serblaza@imm.upv.es}}, AND
Muaz Seydao\u{g}lu\footnote{Department of Mathematics, Faculty of Art and Science,
Mu\c{s} Alparslan University, 49100 Mu\c{s}, Turkey.
{\tt muasey@imm.upv.es}}
}

\usepackage{graphicx}
\usepackage{mathdots}
\usepackage{array}

%% FOR PGF FIGS
%\ifpgf
		\usepackage{pgfplots}
		\usetikzlibrary{plotmarks}
		\usetikzlibrary{external}
		\pgfplotsset{compat=1.3}
		\newlength\figurewidth % For tikz
		\newlength\figureheight
		\tikzexternalize
		% TO MAKE EPS instead of PDF
%\tikzset{external/system call={latex  \tikzexternalcheckshellescape -halt-on-error -interaction=batchmode -jobname "\image" "\texsource" && dvips -o "\image".eps "\image".dvi}}
% Create eps
%\tikzset{external/system call={latex  \tikzexternalcheckshellescape -halt-on-error -interaction=batchmode -jobname "\image" "\texsource" &&
 %dvips -o "\image".eps "\image".dvi}}
		%\tikzset{external/force remake=false} %=true/false
		% end
%\fi
\makeatletter
\def\Ddots{\mathinner{\mkern1mu\raise\p@
\vbox{\kern7\p@\hbox{.}}\mkern2mu
\raise3\p@\hbox{.}\mkern2mu\raise5\p@\hbox{.}\mkern1mu}}
\makeatother
\usepackage{booktabs}
\newcommand{\red}[1]{}%{\textcolor{red}{[#1]}}

\usepackage{amsmath,amssymb}
\def\cost{c}

\DeclareMathOperator\bch{bch}
\def\en{\text{\sc{e}-}}
\def\ep{\text{\sc{e}}}

\def\e{e}
\def\id{I}
\newcommand\defn[1]{\textit{#1}}
\def\eps{\varepsilon}
\def\cB{\mathcal{B}}
\def\cD{\mathcal{D}}
\def\cO{\mathcal{O}}
\def\C{\mathbb{C}}
\def\R{\mathbb{R}}
\def\beq{\begin{equation}}
\def\eeq{\end{equation}}

\setlength{\overfullrule}{0pt} 

\begin{document}
%\slugger{simax}{2014}{xx}{x}{x--x}%slugger should be set to mms, siap, sicomp, sicon, sidma, sima, simax, sinum, siopt, sisc, or sirev

\maketitle

\begin{abstract}
We propose splitting methods for the computation of the exponential of perturbed matrices which can be written as the sum $A=D+\varepsilon B$ of a sparse and efficiently exponentiable matrix $D$ with sparse exponential $e^D$ and a dense matrix $\varepsilon B$ which is of small norm in comparison with $D$.
The predominant algorithm is based on scaling the large matrix $A$ by a small number $2^{-s}$, which is then exponentiated by efficient Pad\'e or Taylor methods and finally squared in order to obtain an approximation for the full exponential.
In this setting, the main portion of the computational cost arises from dense-matrix multiplications and we present a modified squaring which takes advantage of the smallness of the perturbed matrix $B$ in order to reduce the number of squarings necessary.
Theoretical results on local error and error propagation for splitting methods are complemented with numerical experiments and show a clear improvement over existing methods when medium precision is sought.
%\begin{center}
%{\Large \textbf{\jobname} - \today}
%\end{center}
\end{abstract}

\begin{keywords}
matrix exponential, 
scaling and squaring method, 
splitting method,
Pad\'e approximation, 
backward error analysis
\end{keywords}

\begin{AMS}
65F30, %	  Other matrix algorithms
65F60  %  	Matrix exponential and similar matrix functions
\end{AMS}

\pagestyle{myheadings}
\thispagestyle{plain}
\markboth{BADER, BLANES, SEYDAOGLU}{SCALING, SQUARING AND SPLITTING}

\section{Introduction}
The efficient computation of matrix exponentials has been extensively considered in the literature and the \defn{scaling and squaring method} is perhaps the most widely used method for matrices of dimension $n\times n$ with $n$ as large as a few hundred (see \cite{higham09tsa,moler03ndw,sidje98eas} and references therein). 
	For example, Matlab and Mathematica compute numerically the exponential of matrices using this method where highly efficient algorithms for general matrices exist \cite{almohy09ans,higham05tsa,higham09tsa,higham10cma}. 

Given $A \in\C^{n\times n}$, the method is based on the property
\begin{equation}
	\e^A = \left( \e^{A/2^s}  \right)^{2^s} = 
	\underbrace{\left( \cdots \left.\left( \e^{A/2^s}  \right)^{2}\right.^{\Ddots}\right)^{2} }_{s-\text{times}},
\end{equation}
where typically $\e^{A/2^s}$ is replaced by a polynomial approximation (e.g. a $m$th-order Taylor method, $T_m(A/2^s)$) or a rational approximation (e.g. an $2m$th-order diagonal Pad\'e method, $r_{2m}(A/2^s)$) \cite{higham09tsa,higham10cma,sastre14aae}. 
The optimal choice of both $s$ and the algorithms to compute $\e^{A/2^s}$ usually depend on the value of $\|A\|$ and the desired tolerance, and
%the approximation to the exponential 
have been deeply analyzed.

%For example, a diagonal Pad\'e approximant of order 26 
%can be computed with only 6 products and one matrix inversion (solving a linear system of equations comes at 4/3 the cost of a matrix--matrix product).
%On the other hand, for example, a Taylor approximation of order 16 can also be computed with only 6 products. 

The computational cost, $c(\cdot)$, is usually measured by the number of matrix--matrix products, so $c(\e^A) = s+c(\e^{A/2^s})$, where $c(\e^{A/2^s})$ has to be replaced by the cost of its numerical approximation, e.g.  $c(T_{m}(A/2^s))$ or  $c(r_{2m}(A/2^s))$. Given a tolerance, one has to look for the scheme which provides such accuracy with the minimum number of products
%optimal for a choice of $s$ such that $\|A/2^s\|\leq 0.78$
 (see  \cite{higham09tsa,higham10cma} and references therein).

In some cases, if the matrix $A$ has a given structure, more efficient methods can be obtained \cite{celledoni00ate,celledoni01mft} . For example, to compute the exponential of upper or lower triangular matrices, in \cite{almohy09ans} the authors show that it is advantageous to exploit the fact that the diagonal 
%and first super (or sub-)diagonal 
elements of the exponential are exactly known. 
It is then more efficient to replace the diagonal elements obtained using e.g. Taylor or Pad\'e approximations by the exact solution before squaring the matrix (this technique can also be extended to the  first super (or sub-)diagonal elements).

On the other hand, in many cases the matrix $A$ can be considered as a small perturbation of a sparse 
%and sparsely exponentiable 
matrix $D$, i.e., $A=D+B$ with $\|B\|<\|D\|$ (and frequently $\|B\|\ll\|D\|$) where $\e^D$ is sparse and exactly solvable (or can be accurately and cheaply approximated numerically), and $B$ is a dense matrix. 
This is the case, for example, if $D$ is diagonal (or block diagonal with small matrices along the diagonal), or if it is diagonalizable using only a few elementary transforms. This is also the case, for example, if $n=2k$ and
\[
  D=\left(\begin{array}{cc} 0 & I \\ -\Omega^2 & 0 \end{array} \right)
\]
where $I$ is the $k\times k$ identity matrix and $\Omega$ is a diagonal matrix where $\e^D$ is also an sparse and trivial to compute matrix. This problem can be originated from a semidiscretization of a hyperbolic PDE or from a set of $k$ linearly coupled oscillators.
%\[
%  \e^D=\left(\begin{array}{cc} \cos(\Omega) & \sin(\Omega) \\ -\Omega\sin(\Omega) &  
%	\cos(\Omega)  \end{array}  \right)
%\]
%which is also a sparse matrix.

As a motivational example, let us consider the linear time-dependent system of differential equations
\[
  \frac{d}{dt} X = M(\eps t) X, \qquad X(t_0)=X_0 \in\C^{n\times n}
\]
with $M \in\C^{n\times n}$ and $|\eps|\ll 1$, i.e., $M(\eps t)$ evolves adiabatically with the variable $t$. Suppose that $M(\eps t)$ is instantaneously diagonalizable, i.e., $M(\eps t)=Q(\eps t)D(\eps t) Q^{-1}(\eps t)$ with $D$ a diagonal matrix. 
Then, we can consider what it is usually called the adiabatic picture in quantum mechanics (if $M$ is a skew-Hermitian matrix), i.e., the change of variables, $X=Q(\eps t)Y$ where $Y$ is the solution of the differential equation
\[
  \frac{d}{dt} Y = \left(D - Q^{-1}\frac{d}{dt}Q\right) Y, \qquad 
	Y(t_0)=Q^{-1}(\eps t_0)X_0.
\]
A second order method in the time step $h$ which advances the solution from $t_i$ to $t_i+h$, where $Y_i\approx Y(t_i)$, is given by
\begin{equation}\label{eq.midpoint}
  Y_{i+1} = e^{h\left(D_{1/2} + \eps B_{1/2}\right)} Y_{i},
\end{equation}
where
\[
 D_{1/2}= D\left(\eps\left(t_{i+1/2}\right)\right) , \qquad
 \eps B_{1/2}= - Q^{-1}\left(\eps \left(t_{i+1/2}\right)\right)
\frac{d}{dt}Q\left(\eps \left(t_{i+1/2}\right)\right),
% D_{1/2}= D\left(\eps\left(t_i+\frac{h}2\right)\right) , \qquad
% \eps B_{1/2}= - Q^{-1}\left(\eps \left(t_i+\frac{h}2\right)\right)
%\frac{d}{dt}Q\left(\eps \left(t_i+\frac{h}2\right)\right).
\]
with $t_{i+1/2}=t_i+\frac{h}2$. Notice that $\eps B_{1/2}$ is, in general, a dense matrix with a small norm (proportional to $\eps$) due to the term $\frac{d}{dt}Q(\eps t)$.

It is then natural to look for methods that approximate the  exponential \eqref{eq.midpoint} at a low computational cost while providing sufficient accuracy. 
%Higher-order approximations to the solution, $Y(t)$, given by compositions of exponentials with a similar structure to \eqref{eq.midpoint}, are provided by commutator-free Magnus integrators (see \cite{blanes09tme} and references therein). 
Notice that in most cases in practice it is not necessary to approximate the exponential up to round-off accuracy since the model/method itself does not reproduce the exact solution within round-off precision. However, the preservation of qualitative properties (e.g. orthogonality, symplecticity, unitarity, etc.) is in some cases of great interest \cite{iserles00lgm}.

The aim of this work is the exploration of new and more efficient algorithms which take advantage of the fact that $\e^D$ is sparse and known at a cheap computational cost and that $B$ has a small norm. 
The schemes we analyze in continuation are based on splitting and composition techniques tailored for this particular problem. 

For clarity in the presentation, we take the partition $s=s_1+s_2$, we set $h=2^{-s_2}, N=1/h=2^{s_2}$ and replace $B$ by $\eps B$ with $\|B\|\sim\|D\|$, and we propose a new recursive procedure that we refer as Modified Squaring 
\begin{equation} \label{eq.Recursive}
	X_0 = e^{b h \eps B}, \qquad
	X_k = X_{k-1}e^{a_k h D}X_{k-1}, \qquad k=1,\ldots, s_1
\end{equation}
and $Y_{s_1} = e^{a_{s_1+1}hD}X_{s_1}e^{a_{s_1+1}hD}$ where $b=1/2^{s_1}$ and the parameters $a_k$ will be chosen properly to improve accuracy.  The total cost is
\[
 c(Y_{s_1}^{s_2}) = s_1 + s_2 + c(e^{b h \eps B})
\]
where $c(e^{b h \eps B})=c(e^{\eps B/2^s})$ is the cost to approximate this exponential. Since $\|h\eps B\|$ is very small, a low-order diagonal Pad\'e approximation can provide sufficient accuracy (for most problems it will suffice just to consider $r_2$ or $r_4$ which only require one inversion or one inversion and one product, or even a low-order Taylor approximation can also be used).

The choice $s_1=0$ corresponds to the Leapfrog or Strang method,
\begin{equation} \label{eq.leapfrog}
	\e^{h(D+\eps B)} \approx 	\e^{hD/2}	\e^{h\eps B}	\e^{hD/2},
\end{equation}
where, as already mentioned, $\e^{hD/2}$ can be accurately and cheaply computed.

More accurate methods can be obtained using a general composition
\begin{equation}\label{eq.splitting}
%	\Psi
S^{[m]}_p =	\prod_{i=1}^m\e^{ha_iD}	\e^{hb_i \eps B}	\approx \e^{h(D+\eps B)},
\end{equation}
%\del{Obviously, this composition makes sense if accurate solutions are obtained at a low computational cost. }
where the coefficients $a_i,b_i$ are chosen such that $S_p^{[m]}$  is an approximation to the exact solution up to a given order, $p$, in the parameter $h$, i.e. $S_p^{[m]}=\e^{h(D+\eps B)}+\cO(h^{p+1})$.
However, to get efficient methods it is crucial to reduce the computational cost.
Since the cost is dominated by the exponentials $\e^{hb_i \eps B}$, it is advisable to reuse as many exponentials as possible, e.g., letting $b_i=1/m$, only one exponentiation is necessary. 
However, this class of methods has some limitations since for orders greater than 2, at least one of the coefficients $a_i$ and one of the $b_i$ must be negative and thus might jeopardize the re-utilization of the exponentials. 
However, for small perturbations, very accurate results can still be obtained with positive coefficients.

In the particular situation when $A\in \C^{n\times n}$, complex coefficients, $a_i\in \C$, can be used without increasing the computational cost, and then fourth-order methods with all $b_i$ real and equal are achievable. The proposed recursive algorithm (\ref{eq.Recursive}) corresponds to a particular case of an splitting method where the cost has been reduced while still leaving some free parameters for optimisation.

In this work, we assume that the product $B^2$ requires $\cO\left({{n}^3}\right)$ operations but $DB$ requires only $\cO\left({k{n}^2}\right)$ with $k\ll {n}$ (e.g. $c(B^2)=1, \ c(DB)=\delta$, with $\delta\ll 1$). 
Then, the commutator $\eps[D,B]=\eps(DB-BD)$ can be computed at considerably smaller cost than the product of two dense matrices while retaining a small norm due to the factor $\eps$. 
It then makes sense to consider the recursive algorithm (\ref{eq.Recursive}) where the exponential $e^{b h \eps B}$ is replaced by 
\begin{equation} \label{eq.leapfrogMod}
%   \Psi_2 =	\e^{hD/2}	\ 	
		\e^{bh\eps B+ \alpha h^3\eps [A,[A,B]]}	
%		\ \e^{hD/2}			\approx 	\e^{h(D+\eps B)},
\end{equation}
whose computational cost is similar,
% to \eqref{eq.leapfrog}
 but more accurate results can be obtained if the scalar parameter $\alpha$ is properly chosen. 
Further exploiting this approach leads to the inclusion of the term $\beta h^5\eps [A,[A,[A,[A,B]]]]$ in the central exponential, which again, for an appropriate choice of the parameter $\beta$, decreases the error at a similar computational cost. The analysis presented in this work is also extended to the case in which not all parameters $b_i$ are taken equal.

 This paper is organized as follows: 
Section~2 considers the computational cost of Pad\'e and Taylor methods as well as the cost of all operations involved in the splitting schemes analyzed in this work in order to develop new algorithms which minimize the whole cost. 
In Section~3 we analyze the algebraic structure of the different families of methods considered to obtain the order conditions to be satisfied by the coefficients. 
In Section~4 we propose a recursive algorithms to minimize the cost of the methods and we build new methods. An error analysis is carried in Section~5 and Section~6 illustrates  the performance of the methods on several numerical examples. Finally, Section~7 presents the conclusions and the appendix collects, for completeness, several new families of splitting methods which have also been analyzed.

\section{Computational cost of matrix exponentiation}

\subsection{Computational cost of Taylor and Pad\'e methods}

We first review the computational cost of the optimized Taylor and Pad\'e methods which are used in the literature and that are used as reference in the numerical examples.

\paragraph{Taylor methods}

We use the Paterson-Stockmeyer scheme (see \cite{higham08fom,higham10cma,paterson73otn}) to evaluate $T_m=\sum_{k=0}^mA^n/n!$  which minimize the required number of products. 
%\red{CHECK INDICES AND SUMMATION LIMITS}
%\begin{equation}\label{eq.taylor}
%	T_m(A) = 
%	\sum_{k=0}^l g_k(A^{\tau})^k =
%	g_0 + \bigl(g_1 + \cdots (g_{l-1}+g_l\underbrace{A^\tau)A^\tau)\cdots\bigr)A^\tau}_{l\;\text{products}}

%	, 
%	\qquad l=\left\lfloor m/\tau ,\right\rfloor
%\end{equation}
%where $1\leq \tau\leq m$ is an integer and
%\[
%	g_k = \left\{ \begin{array}{ll}
%	   \sum_{i=0}^{\tau} A^{\tau-i}/(\tau k+ \tau-i)!,\qquad k=0,1,\ldots,l-1 \\
%		 \sum_{i=l\tau}^{m} A^{i-l\tau}/i!,\qquad k=l.
%		\end{array} \right.
%\]
From the Horner-scheme-like computation, 
%indicated in \eqref{eq.taylor}, it is clear that
given a number of matrix products $2k$, the maximal attainable order is $m=(k+1)^2$.
In \cite{higham10cma}, it is indicated that the optimal choice for most cases corresponds to $k=3$, i.e. order $m=16$ 
%with $\tau=l=4$ that can be computed 
with just 6 products given by: $A^2=AA, \ A^3=A^2A, \ A^4=A^2A^2$ and
\[
  T_{16}(A) = g_0+(g_1+(g_2+(g_3+g_4 A^4) A^4) A^4) A^4,
\]
%where $g_i$ are linear combinations of already computed matrices ($g_i=c_{i,0}\id+c_{i,1}A+c_{i,2}A^2+c_{i,3}A^3, \ i=0,1,2,3$, where the identity (matrix) is denoted by $\id$, and $g_4=\id/16!$). This algorithm requires to keep 6 dense matrices in memory.
where $g_i$ are linear combinations of already computed matrices,
$g_i=\sum_{k=0}^4 c_{i,k} A^k$, with $c_{i,k}=1/(4i+k)!$ for $i=0,1,2,3$ and $g_4=\id/16$ proportional to the identity (matrix).
 %($g_i=c_{i,0}\id+c_{i,1}A+c_{i,2}A^2+c_{i,3}A^3, \ i=0,1,2,3$, where the identity (matrix) is denoted by $\id$, and $g_4=\id/16!$). 
%This algorithm requires to keep 6 dense matrices in memory.

\paragraph{Diagonal Pad\'e methods}
% of order 26 with 6 products and one inversion
%We aim to compute the
Diagonal Pad\'e methods are given by the rational approximant
\begin{equation}\label{eq.24}
		r_{2m}(A )=\frac{p_{m}(A )}{p_{m}(-A )},
\end{equation}
provided the polynomials $p_{m}$ are generated by the recurrence
\begin{align}
		p_{0}(A ) &= \id, \ \qquad  p_{1}(A )=2\id+A  \nonumber \\
		p_{m}(A ) &=2(2m-1)p_{m-1}(A )+A^{2}p_{m-2}(A ).  \label{eq.RecPade}
\end{align}
Moreover,
$r_{2m}(A )= \e^{A }+\mathcal{O}(A^{2m+1})$, whereas
for $m=1,2$ we have
\begin{equation}\label{eq.Pade24}
r_{2}(A ) = \frac{\id+A/2}{ \id-A/2}, \qquad  \qquad 
r_{4}(A ) = \frac{\id+A/2+A ^{2}/12}{ \id-A/2+A ^{2}/12}.
\end{equation}
%\begin{align*}
%r_{4}(A ) &=\left( \id+\frac{1}{2}A+\frac{1}{12}A ^{2}\right) \Big/
%						\left( \id-\frac{1}{2}A+\frac{1}{12}A ^{2}\right)
%\\
%r_{6}(A ) &=\left( 1+\frac{1}{2}A +\frac{1}{10}A ^{2}+\frac{1}{120}A ^{3}\right) \Big/
						%\left( 1-\frac{1}{2}A +\frac{1}{10}A ^{2}-\frac{1}{120}A ^{3}\right) .
%\end{align*}
The recursive algorithm (\ref{eq.RecPade}) is, however, not an efficient way to compute $r_{2m}(A )$. 
%For example, it is immediate to see that, in addition to the computation of the inverse of a matrix for all $r_{2m}$, $r_{2}$ requires no additional products and $r_{4}$ only one product ($A^2$). 
%In this work,\red{CHECK WITH NUMERICAL EXPERIMENTS, and $s$ recommendation} we consider as a reference
For example, the method $r_{26}(A)$ is considered among the optimal choices (with respect to accuracy and computational cost) of diagonal Pad\'e methods when round off accuracy is desired and $\|A\|$ takes relatively large values. 
The algorithm to compute  it is given by
\begin{equation}
	(-u_{13}+v_{13}) r_{26}(A) = (u_{13}+v_{13}),
\end{equation}
with 
\begin{align*}
	u_{13} & =  A[A_6(b_{13}A_6 + b_{11}A_4 + b_9A_2) + b_7A_6 + b_5A_4 + b_3A_2 + b_1 \id ], \\
	v_{13} & =  A_6(b_{12}A_6 + b_{10}A_4 + b_8A_2) + b_6A_6 + b_4A_4 + b_2A_2 + b_0\id,
\end{align*}
where $A_2=A^2, A_4=A_2^2, A_6=A_2A_4$.
Written in this form, it is evident that only six matrix multiplications and one inversion are required.  
%This algorithm requires to keep 10 dense matrices in memory.  
In a similar way, the method $r_{10}(A)$, which will be used in this work, only requires 3 products and one inversion.

\subsection{Computational cost of splitting methods}

Recall that we are considering a sparse and sparsely exponentiable matrix $D$, while $B$ is a dense matrix and responsible for the numerical complexity. 
In order to build competitive algorithms, it is important to analyze - under these assumptions - the computational cost of all operations involved in the different classes of splitting and composition methods.
% similar results as with Taylor or Pad\'e methods but at a lower computational cost.

Let $X,Y$ be two dense $n\times n$ matrices and denote by $c(\cdot)$ the cost of the operations in brackets as the number of matrix--matrix products of dense matrices, e.g., $c(X Y)=1$ and $c(X+ Y)=\delta$, with $\delta\ll 1$, thereby neglecting operations with a lower complexity in the number of operations. According to this criterion, we derive Table~\ref{tab.cost}, where the dominant terms are highlighted in boldface (the cost for the inverse of a matrix is taken as 4/3 the cost of a matrix-matrix product).
\begin{table}[!ht]
\begin{center}\footnotesize
\begin{tabular}{lll}
\toprule
 & Operation& Effort\\[1mm]
Sum & $c(D+ D)\approx 0$  &${\cal O}(k\, n)$, with $k\ll n$\\
& $c(X+ Y)=\delta$&${\cal O}(n^2)$\\[.5mm]
Product & $c(XY)={\bf 1}$&${\cal O}(n^3)$\\
 & $c(D D)=0$&${\cal O}(k^2\, n)$\\
	&$c(D X)=k\delta$&${\cal O}(k\, n^2)$\\

Inversion& $c(X^{-1} Y)={\bf 1+\frac13}$&$c(X^{-1} Y)=\frac43c(X Y)$\\[.5mm]%& (if $X^{-1} Y=Z$, one has to solve $XZ=Y$).\\
Commutation
&$c([D,X]) = c(D X-X D)=2k\delta$ & ${\cal O}(k\, n^2)$\\
&$c([D,[D,\ldots,[D,X]\cdots]])=2rk\delta$&${\cal O}(k\, n^2)$\\[.5mm]
Exponentiation & $c(\e^D)=wk\delta$&${\cal O}(k^2\, n)$\\%\label{cost8}
& $c(r_{2}(X))={\bf 1+\frac13}$  & ${\cal O}(n^3)$\\
& $c(r_{4}(X))={\bf 2+\frac13}$  & ${\cal O}(n^3)$\\% \label{cost10}
%Others, 
%\midrule
%Dense matrices operations\\
%Sum & $c(X+ Y)$&${\cal O}(n^2)$\\
%Product& $c(X Y)$&${\bf n^3}$\\%\label{cost4}
%Inversion& $c(X^{-1} Y)$&${\bf \frac43 n^3}$\\%& (if $X^{-1} Y=Z$, one has to solve $XZ=Y$).\\
%Exponential& $c(r_{2k}(X))$  & $\bf (k+\frac13){\bf n^3}$\\% \label{cost10}
%If $\e^X$ is known then\\
	% \begin{enumerate}
		%& $c(\e^{2X})=c(\e^X\e^X)$&${\bf n^3}$ \\
		%&$n=3,4$: $c(\e^{nX})$&${\bf 2n^3}$ \\
		%& $c(\e^{-X})$&${\bf \frac43 n^3}$\\  %\label{cost11b}
		%& $c(\e^{D}\e^{X})$&${\cal O}(k\, n^2)$\\ %\label{cost11c}
\bottomrule
\end{tabular}
\end{center}
\caption{\label{tab.cost}Computational cost of matrix operations for the sparse and sparsely exponentiable matrix $D$ and arbitrary dense matrices $X,Y\in\mathbb{C}^{n\times n}$. The factor $w$ in $c(e^D)$ is assumed to be small, $w\ll1$.}
\end{table}

Based on this analysis, we examine the splitting method \eqref{eq.splitting} to identify the computationally relevant aspects. 
 In this work we assume $\delta\ll 1$ and in our computations we will take $\delta=0$ for simplicity.
First, we have to choose how to approximate the exponentials $\e^{h\eps b_i B}$ taking into account that
\begin{eqnarray}
	r_2(h\eps b_i B)&=&  \e^{h\eps b_i B} +{\cal O}(h^3 \eps^3)	 \label{eq.Pade2} , \\ 
	r_4(h\eps b_i B)&=&  \e^{h\eps b_i B} +{\cal O}(h^5 \eps^5)	 \label{eq.Pade4} .
\end{eqnarray}
A rough estimate for the composition  \eqref{eq.splitting}, assuming all coefficients $b_i$ different, and taking into account the cost shown in Table~\ref{tab.cost}, we have
\[
 c(S^{[m]}_p,r_2)=m\frac43+m-1=\frac73 m -1, \qquad
 c(S^{[m]}_p,r_4)=m\frac73+m-1 = \frac{10}{3}m-1,
\]
where $c(S^{[m]}_p,r_i)$ denotes the cost of the method $S^{[m]}_p$ when the exponentials $\e^{\eps B}$ are approximated by $r_i(\eps B)$.
Repeating the coefficients $b_i$, i.e., $b_i=1/m, \ i=1,\ldots,m$, the computational cost can be reduced considerably, in this case, one gets
\[
 c(S^{[m]}_p,r_2)=\frac43+(m-1)=m+\frac13, \qquad
 c(S^{[m]}_p,r_4)=m+\frac43.
\]
Further simplifications are applicable and will be discussed in Sect.~\ref{sec:splitting}.

\section{The Lie algebra of perturbed systems: $(p_1,p_2)$ methods}
Following the terminology of \cite{mclachlan95cmi}, we introduce a modified error concept which is suitable for the near-integrable structure of the matrix $A$ at hand.

Letting $S^{[m]}_p$ be a $p$th-order $m$-stage consistent ($\sum_i a_i=\sum_i b_i=1$) splitting method \eqref{eq.splitting}, we expand its error as
$$
	S^{[m]}_p - e^{hA} = \sum_{i=p+1} \sum_{j=1} e_{i,j} \eps^j h^{i} C_{i,j},
$$
where $e_{i,j}$ is a polynomial in the splitting coefficients $a_k,b_k$ and $C_{i,j}$ is a sum of matrix products consisting of all combinations containing $(i-j)$ sparse elements $D$ and $j$ times $B$. 
Notice that in addition to the scaling $h$, we also expand in powers of the small parameter $\eps$.
The method is said to be of order $p=(p_1, p_2, \ldots)$ if $e_{i_1,1}=e_{i_2,2}=\ldots=0$ for all $i_k\leq p_k$ and $p_1\geq p_2\geq \cdots$.

Designing a method now consists of identifying the dominant error terms $e_{i,j} \eps^j h^{i}$ %C_{i+1,j}$ 
and finding coefficients $a_j, b_j$ to zero the polynomial $e_{i,j}$.
The main tool in this endeavor is the Baker-Campbell-Hausdorff formula which provides a series expansion of the single exponential that has been actually computed when multiplying two matrix exponentials,
$$
	e^{hA}e^{hB} = e^{\bch(hA,hB)}, \quad \bch(hA,hB) = h(A+B) + \frac{h^2}{2}[A,B] + \mathcal{O}(h^3).
$$
Recursive application of this formula to a symmetric splitting \eqref{eq.splitting} establishes the concept of a modified matrix $h\tilde{{A}}$, along the lines of backward-error-analysis, 
%\red{Not the same $e$, not the same indexing!!}
\begin{multline}\label{eq.backward}
	 \log(S^{[m]}_p) = h\tilde{{A}} =
	hA +\tilde e_{3,1}\varepsilon h^3[D,[D,B]] +\tilde e_{3,2}\varepsilon^2 h^3[B,[D,B]]\\
																	  +\tilde e_{5,1}\varepsilon h^5[D,[D,[D,[D,B]]]] +\tilde e_{5,2}\varepsilon^2 h^5[[D,[D,B]],[D,B]]\\ +\tilde e_{5,3}\varepsilon^2 h^5[B,[D,[D,[D,B]]]]																		
																		+\tilde e_{7,1}\varepsilon h^7[D,[D,[D,[D,[D,[D,B]]]]]] + \mathcal{O}\left(\varepsilon^3 h^5 + \varepsilon^2 h^7\right),
\end{multline}
where the $\tilde e_{i,j}$ are also polynomials in the splitting coefficients $a_k,b_k$ which multiply elements of the Lie algebra and are different from the coefficients $e_{i,j}$.
Higher-order terms can be computed by efficient algorithms \cite{casas09afa}.

\subsection{Error propagation by squaring}

The splitting method (\ref{eq.backward}) can also formally be written as
\begin{equation} \label{eq.Sp1p2}
	S^{[m]}_{(p_1,p_2)} = \exp\left( h(D+\varepsilon  B) +
		\varepsilon \sum_{k>p_1} c_k h^k[D^k,B]+ \mathcal{O}\left(\varepsilon^2 h^{p_2+1}\right) \right)
\end{equation}
where $[D^k,B]=[D,[D,[\ldots,[D,B]\ldots]]]$ and there is only one term proportional to $\varepsilon$ at each power of $h$. 
We can then define a \textit{processor}, a close to the identity map
\begin{equation} \label{eq.Proc1}
	P = \exp\left( -\varepsilon \sum_{k>p_1} c_k h^{k-1}[D^{k-1},B] \right),
\end{equation}
such that the method can be written as
\begin{equation} \label{eq.Sp1p2Proc}
	S^{[m]}_{(p_1,p_2)} = PKP^{-1},
\end{equation}
with
\begin{equation} \label{eq.Sp1p2K}
	K= \exp\left( h(D+\varepsilon  B) + \mathcal{O}\left(h^{p_2+1}\varepsilon^2\right) \right).
\end{equation}
Suppose now that the matrix $A$ can be diagonalized, $A=QD_AQ^{-1}$, then clearly
\begin{equation*}
	\e^{A}=Q\e^{D_A}Q^{-1}.
\end{equation*}
%Here we can suppose that $n=2^s$ and $A$ is the scaled matrix.
The \textit{kernel} $K$ of the numerical method, on the other hand, can be diagonalized for sufficiently small $h=1/n$ and $\varepsilon$ using
\[
  \hat Q = Q + \mathcal{O}\left(h^{p_2+1}\varepsilon^2\right),  \qquad
  \hat D_A = hD_A + \mathcal{O}\left(h^{p_2+1}\varepsilon^2\right),
\]
such that, after $n$ integration steps, we obtain
\begin{equation}\label{eq.Kn}
	K^n=\hat Q\e^{\tilde D_A}\hat Q^{-1}.
\end{equation}
with $  \tilde  D_A = D_A + \mathcal{O}\left(nh^{p_2+1}\varepsilon^2\right)$.
The size estimates of the above considerations lead to a favorable error propagation result which is stated in the following theorem.

\begin{theorem}\label{theorem1}
Let $A=D+\varepsilon B$ a diagonalizable matrix such that $\e^A$ is bounded and let $S^{[m]}_{(p_1,p_2)}$ be a splitting method that approximates the scaled exponential  $\e^{hA}$ with $h=1/n$. Then, 
 for sufficiently small values of $h$ and $\varepsilon$ we have that
\begin{equation}\label{eq.thm1}
	\left\|\e^A - \left( S^{[m]}_{(p_1,p_2)} \right)^n  \right\| \leq C_1 h^{p_1+1}\varepsilon +  n C_2 h^{p_2+1} \varepsilon^2.
\end{equation}
 where $C_1,C_2$ are constants which do not depend on $h$ and $\varepsilon$.
\end{theorem}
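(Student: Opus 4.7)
The plan is to use the processor decomposition (\ref{eq.Sp1p2Proc})--(\ref{eq.Sp1p2K}) to decouple the two error sources. Iterating $S^{[m]}_{(p_1,p_2)} = PKP^{-1}$ over $n$ steps gives $(S^{[m]}_{(p_1,p_2)})^n = PK^n P^{-1}$, reducing the problem to estimating separately the kernel propagation $K^n$ versus $e^A$ and the conjugation by the near-identity map $P$. This cleanly matches the structure of the claimed bound: the $\varepsilon^2$-term, which grows linearly in $n$, will come entirely from the kernel, while the $\varepsilon$-term, which does not grow with $n$, will come from the processor conjugation.

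For the kernel, I would invoke the diagonalization (\ref{eq.Kn}), $K^n = \hat Q e^{\tilde D_A}\hat Q^{-1}$, and compare with $e^A = Qe^{D_A}Q^{-1}$ through the telescoping identity
\[
K^n - e^A = (\hat Q - Q)\,e^{\tilde D_A}\hat Q^{-1} + Q(e^{\tilde D_A} - e^{D_A})\hat Q^{-1} + Qe^{D_A}(\hat Q^{-1} - Q^{-1}).
\]
Combining the size estimates $\hat Q - Q = \mathcal{O}(h^{p_2+1}\varepsilon^2)$ and $\tilde D_A - D_A = \mathcal{O}(n h^{p_2+1}\varepsilon^2)$ stated just before the theorem with the boundedness of $e^A$ (hence of $e^{\tilde D_A}$ for small $h,\varepsilon$) and the standard Lipschitz estimate $\|e^X - e^Y\|\leq e^{\max(\|X\|,\|Y\|)}\|X-Y\|$ yields $\|e^A - K^n\| \leq C\,n h^{p_2+1}\varepsilon^2$.

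The global estimate then follows from the triangle inequality
\[
\|e^A - PK^nP^{-1}\| \leq \|P\|\|P^{-1}\|\,\|e^A - K^n\| + \|e^A - Pe^AP^{-1}\|.
\]
Since $P = I + \mathcal{O}(\varepsilon h^{p_1})$, the factors $\|P\|, \|P^{-1}\|$ are uniformly bounded for small $h,\varepsilon$, so the first summand inherits the $nC_2 h^{p_2+1}\varepsilon^2$ contribution of (\ref{eq.thm1}). For the remaining term I would use $Pe^AP^{-1} = e^{PAP^{-1}}$ together with the adjoint expansion $PAP^{-1} - A = [\log P, A] + \tfrac{1}{2}[\log P,[\log P, A]] + \cdots$. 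By construction of the processor (\ref{eq.Proc1}), the commutator with $D$ advances the index $k-1 \mapsto k$ inside the series defining $\log P$, so $[\log P, A]$ produces precisely the next subleading contribution, and applying the Lipschitz bound on the exponential gives $\|e^A - Pe^AP^{-1}\| \leq C_1 h^{p_1+1}\varepsilon$, completing (\ref{eq.thm1}).

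The main obstacle is exactly this last estimate. A black-box conjugation argument using only $\|P-I\| = \mathcal{O}(\varepsilon h^{p_1})$ and the Lipschitz dependence of $e^{(\cdot)}$ on its argument delivers only $\mathcal{O}(\varepsilon h^{p_1})$; the additional power of $h$ must be extracted from the specific algebraic structure of $\log P$, exploiting that its dominant term $-\varepsilon c_{p_1+1}h^{p_1}[D^{p_1},B]$ has been engineered so that its commutator with $D$ reproduces, up to one additional factor of $h$, the very commutator $[D^{p_1+1},B]$ it was designed to cancel in the modified matrix (\ref{eq.backward}). Ensuring that this one-order gain survives passage through $e^{(\cdot)}$ is the only nonroutine step of the argument.
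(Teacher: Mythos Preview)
Your approach is essentially the paper's, only organized slightly differently. The paper also starts from $(S^{[m]}_{(p_1,p_2)})^n = PK^nP^{-1}$ and the kernel diagonalization $K^n=\hat Q e^{\tilde D_A}\hat Q^{-1}$, but instead of handling the processor conjugation and the kernel error in two separate passes it absorbs the processor into the eigenvector matrix, setting $\tilde Q=P\hat Q$, and then telescopes directly
\[
\bigl\|Qe^{D_A}Q^{-1}-\tilde Q e^{\tilde D_A}\tilde Q^{-1}\bigr\|
\le \|Q-\tilde Q\|\,\|e^{D_A}Q^{-1}\| + \|\tilde Q\|\,\|e^{D_A}Q^{-1}-e^{\tilde D_A}\tilde Q^{-1}\|,
\]
with a second analogous splitting of the rightmost factor. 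Your two-stage decomposition (first $\|e^A-K^n\|$, then $\|e^A-Pe^AP^{-1}\|$) and the paper's single telescoping on $(\tilde Q,\tilde D_A)$ versus $(Q,D_A)$ produce the same bounds from the same ingredients.

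The one point where the two presentations diverge is precisely the place you flag as the main obstacle. Your careful discussion of why $\|e^A-Pe^AP^{-1}\|$ is $\mathcal{O}(\varepsilon h^{p_1+1})$ rather than merely $\mathcal{O}(\varepsilon h^{p_1})$---via $Pe^AP^{-1}=e^{PAP^{-1}}$ and the commutator structure of $\log P$---is replaced in the paper by the bare assertion $\tilde Q = P\hat Q = Q+\mathcal{O}(h^{p_1+1}\varepsilon)$, without further justification. So your proposal is not only aligned with the paper's argument but is, at that step, more explicit about where the extra power of $h$ has to come from.
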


\begin{proof}
From (\ref{eq.Sp1p2Proc}) and (\ref{eq.Kn}) we have that 
\begin{equation} \label{eq.Sp1p2Proc2}
	\left(S^{[m]}_{(p_1,p_2)}\right)^n = P\hat Q\e^{\tilde D_A}\hat Q^{-1}P^{-1}
	 = \tilde Q\e^{\tilde D_A}\tilde Q^{-1}
\end{equation}
where now $\tilde Q = P\hat{Q} = Q + \mathcal{O}\left(h^{p_1+1}\varepsilon\right)$. Then
\begin{align}\label{eq.proof1}
	\left\|\e^A - \left( S^{[m]}_{(p_1,p_2)} \right)^n  \right\| & = 
	\left\|Q\e^{D_A}Q^{-1} - \tilde Q\e^{\tilde D_A}\tilde Q^{-1}  \right\|  \nonumber  \\
	& = 
	\left\|Q\e^{D_A}Q^{-1} - \tilde Q\e^{D_A}Q^{-1} + \tilde Q\e^{D_A}Q^{-1} - \tilde Q\e^{\tilde D_A}\tilde Q^{-1}  \right\| \nonumber \\
	& \leq 
	\|Q - \tilde Q\| \ \|\e^{D_A}Q^{-1}\| + \|\tilde Q\| \ \|\e^{D_A}Q^{-1} - \e^{\tilde D_A}\tilde Q^{-1} \|  \nonumber.
%	\\
%	\leq C_1 \varepsilon +  C_2 n \varepsilon^2.
\end{align}
The right summand is expanded in a similar way to
\begin{align}%\label{eq.proof2}
	\|\e^{D_A}Q^{-1} - \e^{\tilde D_A}\tilde Q^{-1} \| & = 
		\|\e^{D_A}Q^{-1} - \e^{\tilde D_A}Q^{-1} + \e^{\tilde D_A}Q^{-1} - \e^{\tilde D_A}\tilde Q^{-1} \|    \\
	& \leq 
	\|\e^{D_A} - \e^{\tilde D_A}\| \ \|Q^{-1}\| + \|\e^{\tilde D_A}\| \ \|Q^{-1} - \tilde Q^{-1} \|.  \nonumber
\end{align}
Taking into account that $\tilde D_A = D_A + \mathcal{O}\left(nh^{p_2+1}\varepsilon^2\right)$, $\tilde Q = Q + \mathcal{O}\left(h^{p_1+1}\varepsilon\right) $, and that $\e^A$ is bounded we obtained the desired result for sufficiently small values of $h$ and $\varepsilon$.
\end{proof}

This result indicates that the error is the sum of a local error of order $\mathcal{O}\left(\varepsilon\right)$ plus a global error of order $\mathcal{O}\left(\varepsilon^2\right)$.
For problems which require a relatively large number of squaring (a large value of $n=2^s$) the dominant error of the splitting methods is proportional to $\varepsilon^2$. Then, to build methods which are accurate for different values of $s$ it seems convenient to look for methods of effective order $(p_1,p_2)$ with $p_1>p_2$

The following numerical example illustrates the results obtained.

{\bf Example} Let
\beq\label{eq.example}
  A= \left( \begin{array}{cc}  \eps & 1+\eps \\ -1+\eps & - \eps
			\end{array}\right), \qquad \qquad
  D= \left( \begin{array}{cc}  0 & 1 \\ -1 & 0
			\end{array}\right)
\eeq
with $\eps=10^{-1},10^{-3}$, and approximate $\e^{2^sA}=\left(\cdots \left(\e^A\right)^2\cdots\right)^2$ to a relatively low accuracy. To approximate $\e^{A}$, we consider a fourth-order Taylor method, $T_4(A)$ (that only requires 2 products) and a fourth-order Pad\'e approximation, $r_4(A)$ (with a cost of one product and one inversion,  equivalent to $1+4/3$ products).
%\red{TO REPLACE $T_6(A),r_6(A)$ BY  $T_4(A),r_4(A)$ SO THE RESULTS WILL APPEAR MORE CLEAR}
We compare the obtained results with the second-order splitting method \eqref{eq.leapfrog}, which we denote by $S_2^{[2,a]}$ or, since in this case $p_1=p_2=2$, $S_{(2,2)}^{[2,a]}$, where the exponential $\e^D$  is computed exactly and $\eps B$ is approximated with the second order diagonal Pad\'e method, $r_2(\eps B)$.
The exact solution is given by
\[
  \e^{2^sA}= \left( \begin{array}{cc}  
	\cos(2^s\mu) + \frac{\eps}{\mu} \sin(2^s\mu)  & \frac{1+\eps}{\mu}\sin(2^s\mu) \\
	- \frac{1-\eps}{\mu} \sin(2^s\mu) & \cos(2^s\mu) - \frac{\eps}{\mu} \sin(2^s\mu)  
			\end{array}\right)
\]
with $\mu=\sqrt{1-2\eps^2}$ and we analyze the error growth due to the squaring process in Fig.~\ref{fig.longtime}. 
%The Taylor approximation provides unbounded solutions while the Pad\'e approximation is bounded as is the exact solution.
%On the other hand, 
We observe that neither Pad\'e nor Taylor methods are sensitive w.r.t. the small parameter, whereas the splitting method drastically improves when decreasing $\varepsilon$.
The splitting method is only of second order and thus used with a second order Pad\'e method $r_2$ (using the fourth order method $r_4$ leaves error plot unchanged).
Notice that for the small perturbation $\varepsilon=10^{-3}$, the splitting with $r_2(\eps B)$ is more accurate than the fourth-order Pad\'e $r_4(A)$ which comes at nearly twice the computational cost (1 inversion vs. 1 inversion and 1 dense product). According to Theorem~\ref{theorem1}, the error of $S_{(2,2)}^{[2,a]}$ is the sum of a local error proportional to $h^{3}\varepsilon$ and a global error proportional to  $n  h^{3} \varepsilon^2$, with $n=2^s$. Fig.~\ref{fig.longtime} shows the results obtained for different values of $\varepsilon^2$ and $s$ which clearly show both error sources.

%Last, we observe that the error growth is bounded at $\cO(1)$ for the geometric methods, whereas the error associated with Taylor's method continues to grow. 

\begin{figure}[!ht]
\centering
	%\ifpgf
	\pgfplotsset{every axis plot/.append style={line width=1.2pt, mark size=2pt},
		tick label style={font=\footnotesize},
		every axis/.append style={%
		minor x tick num=2,
		minor y tick num=2,
		minor z tick num = 2,
		scale only axis, % apply width/height only to axis
		font=\footnotesize,
		}
	}
	\setlength\figurewidth{.4\textwidth}%
	\setlength\figureheight{.2\textwidth}
	\tikzsetnextfilename{figs/longtime_final}
	\includegraphics{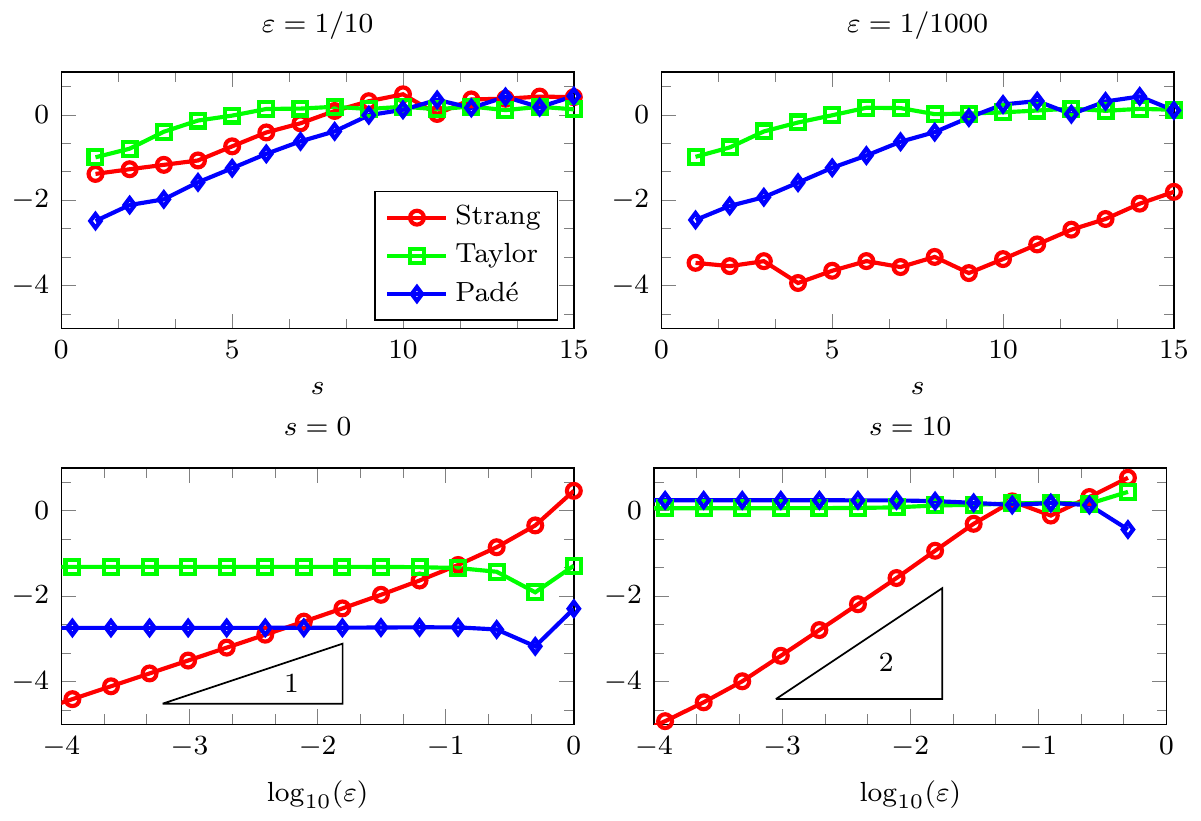}

	%\else
%	\fbox{\includegraphics[width=\textwidth]{figs/longtime_final}}
	%\includegraphics[width=\textwidth]{figs/longtime4}
	%\fi
\caption{\label{fig.longtime} Error in the approximation to $\e^{2^sA}$ with $A$ given by \eqref{eq.example} for different values of $\varepsilon$ and number of squaring, $s$, in double-logarithmic axes. The bottom figures show that the error of the splitting methods is proportional to $\varepsilon$ for small $s$ (local error) and proportional to $\varepsilon^2$ for large values of $s$ (global error)}
\end{figure}

%\red{LOCAL AND GLOBAL ERROR FOR SPLITTING METHODS}

\section{Splitting methods for scaling and squaring}\label{sec:splitting}
Taking into account the numerical effort established in the introduction, we derive methods which are optimal for the problem at hand. 
The optimization principle becomes clear at the example of the two versions of Strang's second-order splitting method
\def\e{e}
\begin{align}\label{eq:StrangABA}
  & &S_2^{[2,a]}&=
  \e^{\frac{h}2 D} \ \e^{h \varepsilon B} \ \e^{\frac{h}2 D}
  =\mathcal{D}_{h/2}\mathcal{B}_{h}\mathcal{D}_{h/2},\\
	\label{eq:StrangBAB}
\text{and} & &
  S_2^{[2,b]}&=
  \e^{\frac{h}2 \varepsilon B} \ \e^{h D} \ \e^{\frac{h}2 \varepsilon B}
    =\mathcal{B}_{h/2}\mathcal{D}_{h}\mathcal{B}_{h/2},
\end{align}
which differ in computational cost: 
Using the notation $\mathcal{D}_h=e^{hD}$, $\mathcal{B}_h=e^{hB}$, and keeping in mind that $\mathcal{D}_{h}$ is a sparse matrix while $\mathcal{B}_{h}$ is dense, the dominant numerical cost amounts to a single exponential with $\cost(S_2^{[2,a]})=\cost(\mathcal{B}_{h})$ for the first version, whereas the latter requires an additional matrix product, $\cost(S_2^{[2,b]})=\cost(\mathcal{B}_{h/2})+\cost(\mathcal{B} \mathcal{B})$.

Furthermore, the large dominant part $D$ is multiplied by $1/2$ before exponentiation in the cheaper variant which is advantageous in the sense of the scaling process.
% and therefore, only methods of type $\cD\cB\cD$ where the outermost exponentials are sparse, $\cD$, will be considered.

We follow a variety of strategies in order to develop new methods and group them according to the splitting terminology, keeping in mind that the costly parts are products and exponentials of the dense matrices $\mathcal{B}$ and $B$, respectively.

\subsection{Standard splittings}
As we have discussed for the Strang splitting $S_2^{[2,b]}$, despite the appearance of $B$ in two exponents, only one exponential actually has to be computed which is then stored and reused for the second identical exponent.

Generalizing this principle, we search for splitting methods $a_i,b_j$ where all $b_j=b$ are identical to reduce the computational effort which now comes solely from the dense-matrix multiplications.
A composition that is also symmetric in the coefficients $a_j$ will reduce a great number of error terms (since even powers in $h$ disappear) and additionally the amount of (cheap) exponentials $\cD$ to be computed.
%Furthermore, we will focus on methods of type $\cD\cB\cD$, i.e., symmetric compositions where the outermost exponentials are of type $\cD$ since withing our problem setting, the additional numerical effort is marginal.

Next, we derive a particular family of splittings which can be understood in analogy to squarings and allows to reduce the necessary products.

\subsubsection{Modified squarings}
%ReusinThe procedure can be easily generalized to any number of products and can actually be used to substitute the squaring process. 
We propose to replace a given number of squarings by a one-step splitting method which has the benefit of free parameters to minimize the error.
For illustration, let us compute a squaring step, $h=2^{-1}$, of the standard Strang method,

\begin{equation} \label{eq.2Strangs}
	(e^{h/2A}e^{hB}e^{h/2A})^2 = e^{\frac14 A}e^{\frac12 B}e^{\frac12 A}e^{\frac12 B}e^{\frac14 A},
\end{equation}
which we then contrast with a general splitting method at the same cost (one exponential and one product) without squaring ($h=1$),

\begin{equation} \label{eq.2Steps}
	e^{a_2 A}e^{\frac12 B}e^{a_1 A}e^{\frac12 B}e^{a_2 A}.	
\end{equation}
It is evident that (\ref{eq.2Steps}) includes (\ref{eq.2Strangs})  as a special case (choosing $a_1=1/2, a_2=1/4$) and we use the example \eqref{eq.example} to illustrate the gains in accuracy. Fig.~\ref{fig.modified} shows that the performance is very sensitive to the choice of the free parameter and the method of effective order $(4,2)$ is very close to the optimal one.
\begin{figure}[!ht]
\begin{center}
	\pgfplotsset{every axis plot/.append style={line width=1.2pt, mark size=2pt},
		tick label style={font=\footnotesize},
		every axis/.append style={%
		minor x tick num=2,
		minor y tick num=2,
		minor z tick num = 2,
		scale only axis, % apply width/height only to axis
		font=\footnotesize,
		}
	}
	\setlength\figurewidth{.4\textwidth}%
	\setlength\figureheight{.4\textwidth}
	\tikzsetnextfilename{figs/squaring}
	\includegraphics{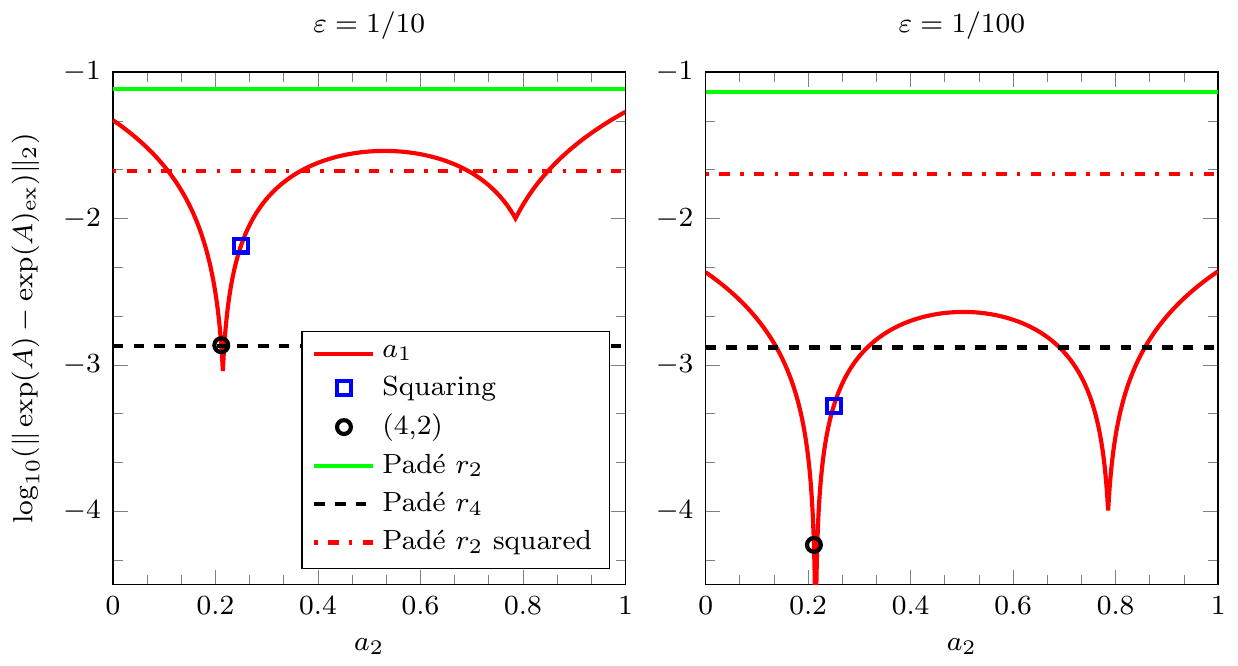}
%{\includegraphics[width=0.48\textwidth]{figs/squaring01}}
%\includegraphics[width=0.48\textwidth]{figs/squaring001}
\end{center}
\caption{\label{fig.modified}Modified squarings. All methods apart from $r_2(A)$ (green solid) have approximately the same numerical cost since the split uses 2nd order pad\'e}
\end{figure}
A larger number of squarings $s$ can be replaced by a recursive procedure,
$$
	X_0 = e^{h b \eps B}, \qquad
	X_k = X_{k-1}e^{a_k h D}X_{k-1}, \qquad k=1,\ldots, s
$$
and $Y_{s} = e^{a_{s+1}h}X_{s}e^{a_{s+1}h}$ where $b=1/2^s$.
The costly multiplications occur in the consecutive steps, $X_k$, where we recycle already computed blocks while introducing free parameters $a_k$ at negligible extra effort. As a result, the cost of the algorithm is
\[
  c(Y_{s}) = s + c(e^{h b \eps B})
\]
where it usually suffices to approximate $e^{h b \eps B}$ with a second or fourth-order Pad\'e method, so $c(e^{h b \eps B},r_2)=\frac43$ and $c(e^{h b \eps B},r_4)=1+\frac43$.
For consistency, the coefficients $a_k$ have to satisfy
$$
 \left( 2^{s-1} a_1  + \cdots+ 2 a_{s-1} +	a_{s} \right)+ 2a_{s+1} = \sum_{k=1}^{s} 2^{s-k} a_k + 2 a_{s+1}= 1.
$$
Notice that the choice $a_{s+1}=1/2^{s+1}$, $a_k=1/2^s$ for $k=1,\ldots, {s}$, corresponds to the standard scaling and squaring applied to the Strang method \eqref{eq:StrangABA}.
In the following, we have collected the most efficient splitting methods for an increasing numbers of products $s=0,1,2,3,4$.
%\red
We have observed in the numerical experiments that for $s>4$, the gain w.r.t. to standard scaling and squaring is marginal, and they are not considered in this work.
% since the ...
%\red{size of coefficients in comparison to scaling? not well distributed weights}

However, the parameter $h$ demonstrates how any such method can be combined with standard scaling and squaring. 

This procedure is equivalent to consider the partition $s=s_1+s_2$ where the first $s_1$ squarings are carried out with the recursive algorithm with $b=1/2^{s_1}$ and we continue with the remaining standard $s_2$ squarings with  $h=1/2^{s_2}$.

\paragraph{$s_1=0$}
Strang $S_2^{[2,a]}$ with
% Pad\'e $r_2$, 
local order $\cO(\eps h^3)$.

\paragraph{$s_1=1$}
After imposing symmetry, one free parameter remains and is used to obtain (4,2) methods \cite{laskar01hos,mclachlan95cmi},
\begin{equation}\label{eq:42ABA}
  Y_1	=
  \mathcal{D}_{ha_2}\mathcal{B}_{h/2}\mathcal{D}_{ha_1} \mathcal{B}_{h/2}\mathcal{D}_{ha_2},
\end{equation}
where $a_2=(3-\sqrt{3})/6, \ a_1=1-2a_2$ and  with
local order $\cO(\eps h^5+\eps^2 h^3)$.

\paragraph{$s_1=2$}
Allowing an additional product, at $b=1/4$, we have
\beq\label{eq.five}
		Y_2 = \cD_{a_3h} (\cB_{h/4} \cD_{a_2h} \cB_{h/4}) \cD_{a_1h} (\cB_{h/4} \cD_{a_2h} \cB_{h/4}) \cD_{a_3h}.
\eeq
Optimizing the free parameters $a_3, a_2$, (where for consistency $a_1=1-2(a_3+a_2)$) we can construct fourth-order methods, although complex-valued, with $a_3 = \frac1{10}(1-i/3), a_2 = \frac{2}{15}(2+i)$ and their complex conjugates $a_i^*$ \cite{castella09smw}.
Alternatively, there are six real-valued (6,2) methods, the best of which is given in Table~\ref{tab.coefs}.

\paragraph{$s_1=3$}
The three parameters for $Y_3$ can be used to produce complex-valued methods of order (6,4) or real-valued methods of order (8,2), the ones with smallest error coefficients can be found in Table~\ref{tab.coefs}.

\paragraph{$s_1=4$}
The next iteration yields a 17-stage method $Y_4$.
Its four parameters can be used to cancel the error coefficients $e_{3,1}, e_{3,2}, e_{5,1}, e_{7,1}$
for 48 complex (8,4) methods, or a $(10,2)$ method with positive real coefficients, see Table~\ref{tab.coefs}.
%minimizing $e_{3,2}$ \red{not yet included}, the ones with smallest remaining error coefficients of each class can be found in Table~\ref{tab.coefs}.

\begin{table}[!ht]
\caption{\label{tab.coefs}Modified squarings with and without commutators. In the right column, the corresponding computational cost is given together with the number of omitted solutions of the order conditions.}
\noindent{\footnotesize
\begin{tabular}{ll}
\toprule
$Y_2$, order (6,2)% $\cD_{a_3h} (\cB_{h/4} \cD_{a_2h} \cB_{h/4}) \cD_{a_1h} (\cB_{h/4} \cD_{a_2h} \cB_{h/4}) \cD_{a_3h}$
 & $\cost(\cB_{h/4}) + 2\cost(\cB \cB)$\\[1mm]
$a_1 = \sqrt{(5-\sqrt{5})/30}, \ 
 a_2 = \sqrt{ (5-2\sqrt{5})/15} $
% a_3 = 1/2 - 1/30 \sqrt{75 + 30 \sqrt{5}} $ 
& %\red{smallest error coeff.}
	[7 solutions omitted]\\
%$a_1=1/2 - 1/30 \sqrt{75 - 30 \sqrt{5}}, a_2 =-0.30353099910334311155$ & drop the following\\
%$a_1=1/2 + 1/30 \sqrt{75 + 30 \sqrt{5}}, a_2 =-0.30353099910334311155$\\
%$a_1=1/2 + 1/30 \sqrt{75 + 30 \sqrt{5}}, a_2 = \sqrt{(5-\sqrt{5})/30}$\\
%$a_1=1/2 - 1/30 \sqrt{75 + 30 \sqrt{5}}, a_2 = \sqrt{(5+\sqrt{5})/30}$\\
%$a_1=1/2 + 1/30 \sqrt{75 - 30 \sqrt{5}}, a_2 = -\sqrt{(5-\sqrt{5})/30}$\\
%$a_1=1/2 - 1/30 \sqrt{75 - 30 \sqrt{5}}, a_2 = \sqrt{(5-\sqrt{5})/30}$\\
%$a_1=1/2 + 1/30 \sqrt{75 + 30 \sqrt{5}}, a_2 = -\sqrt{(5-\sqrt{5})/30}$\\
\midrule
%$s=3$, $\cD_{a_1h} X \cD{a_4h} X \cD{a_1h}$, with $X= Y\cD_{a_3h}Y$, $Y=(\cB_{bh} \cD_{a_2h} \cB_{bh})$,
$Y_3$, order (8,2)\\[2mm]
	$a_1= 0.153942020841153420134790213164$ & only positive solution\\
	$a_2= 0.089999237645462605679630986655$ & [47 omitted]\\
	$a_3= 0.102244554291437558627161030779$ \\
	$a_4=\frac12- (4 a_1 + 2a_2 + a_3)/2$.\\[1mm]
	 					
$Y_3$, order (6,4)		& $\cost(\cB_{h/8}) + 3\cost(\cB \cB)$\\[2mm]
	$a_1 = 0.13534452760420860194  + 0.06201309787740406230i$ & [7 omitted]\\ 
	$a_2 = 0.13027125534284511606  - 0.10310039626441585374i$\\
	$a_3 = 0.099062332740825337251 - 0.015885424766237390724i$\\ 
	$a_4=\frac12- (4 a_1 + 2a_2 + a_3)$\\[1mm]
	%$a_4 = 0.049508523078325011441 - 0.012983087107273575505i$\\ 

\midrule
$Y_4$, order (10,2)&  $\cost(\cB_{h/16}) + 4\cost(\cB \cB) $\\[2mm]
%$\Psi_4$ with 17 stages, $b=1/16$  & $\cost(\cB_{h/16}) + 4\cost(\cB\times \cB) $\\
$a_1 = 0.077255933048297137202077893145$ & only positive solution\\ % = a2
$a_2 = 0.0444926322393204245189059370354$& [383 omitted]\\ % = a3
$a_3 = 0.051080773613693429438027986467$ \\ % = a5
$a_5 = 0.0254553659841308990458390646508$\\ % = a1
$a_4 = 1  - 8a_1 - 4a_2 - 2a_3 - 2a_5$\\[1mm]
            %a1 = 0.0254553659841308990458390646508;
            %a2 = 0.077255933048297137202077893145;
            %a3 = 0.0444926322393204245189059370354;
            %a5 = 0.051080773613693429438027986467;
            %a9 = 1 - 2*a1 - 8*a2 - 4*a3 - 2*a5;
            %a  = [a1 a2 a3 a2 a5 a2 a3 a2 a9 a2 a3 a2 a5 a2 a3 a2 a1 ];
							
$Y_4$, order (8,4)\\[1mm]
$a_1 = 0.06782965853562196485274129 + 0.03038453954138687801299186i$  &  [47 omitted]\\% =a2
$a_2 = 0.06477414774829711915884478 - 0.05170904068177844632921239i$    & 		\\% =a3
$a_3 = 0.04963134399080347125041612 + 0.00584283681423207753349501i$ \\ 		  	% =a5
$a_5 = 0.02474856149827627051056177 - 0.00610084851840072905292033i$		\\% =a1
% more digits
%$a_1 = 0.067829658535621964852741290105 + 0.0303845395413868780129918635336i$,  &  [47 omitted]\\% =a2
%$a_2 = 0.064774147748297119158844788415- 0.051709040681778446329212393647i$,    & 		\\% =a3
%$a_3 = 0.04963134399080347125041612    + 0.00584283681423207753349501i$, \\ 		  	% =a5
%$a_5 = 0.0247485614982762705105617735525 - 0.0061008485184007290529203369276i$		\\% =a1
$a_4 = 1  - 8a_1 - 4a_2 - 2a_3 - 2a_5$\\
				    %a5 = 0.04963134399080347125041582837 - 0.005842836814232077533494951723*1i; 
            %a3 =0.064774147748297119158844788415 - 0.051709040681778446329212393647*1i;
            %a1 =0.0247485614982762705105617735525- 0.0061008485184007290529203369276*1i; 
            %a2 =0.067829658535621964852741290105 + 0.0303845395413868780129918635336*1i;
						%a9 =? 0.04950632974367632102073533 + 0.01235278293871562561425475i$\\
\midrule
\midrule
$\tilde{Y}_2$, order (6,4), minimizing $\cO(\varepsilon^2 h^5)$\\[2mm]
%\cD_{ha_3}(\tilde{\cB}\cD_{ha_2}\tilde{\cB}) \cD_{ha_1} (\tilde{\cB}\cD_{ha_2}\tilde{\cB}) \cD_{ha_3},
%order (6,4)\\
%$a_3= 1/6 - a_2/4$, $a_1=1-2(a_3+a_2)$ \\
%$b=1/4$ $\cB_{1/4, \beta,\gamma}$ with $\beta=(4 - 24 a_2 + 45 a_2^2)/1152$, \\
%$\gamma=(1936 - 10560 a_2 + 22680 a_2^2 - 36720 a_2^3 + 44145 a_2^4)/9953280$ \\[1mm]
$a_1=(1-a_2-2a_3)/2$\\
$a_2=0.47071989362081947165$\\
$a_3 = 0.04898669326146179875$\\
$\beta = -0.002320917859694561351$\\
$\gamma=0.0000329546718228203782$\\[1mm]
$\tilde{Y}_2$, order (8,4) &[47 omitted] \\[1mm]
$a_1=0.3602258146389491220734647$ \\ 
$a_2=1 - 2(a_3 + a_1)$\\
$a_3=0.0766102130069293861483005$ \\
$\beta = -0.00103637077918270398691258$\\
$\gamma = 0.000010240482532598594411391$\\
%$\quad$(or (with worse error) $a_2=0.42907214511774185750$).\\
%
\bottomrule
\end{tabular}
}
\end{table}

\subsection{Modified splittings}
A drastic improvement on the previous methods can be made through the use of commutators.
The special structure of the matrix allows for the fast computation of certain commutators, namely the ones that contain the matrix $B$ only once. 
The inclusion of these commutators in the scheme will not only allow to reduce the number of error terms but also to reach order 4 using only real coefficients.
Since we are interested in symmetric methods of up to order (6,4), the relevant terms are
\begin{align*}
	[D, [D, B]] &= DDB - 2DBD + BDD, \\ 
	[D^4, B] %[D, [D, [D, [D, B]]]] 
	&= DDDDB - 4DDDBD + 6DDBDD - 4 DBDDD + BDDDD,
\end{align*}
and neglecting the numerical cost of summation and multiplication by a sparse matrix $D$, it is clear that the exponential
\[
	e^{\alpha h B + \beta h^3 [D,[D,B]] + \gamma h^5 [D, [D, [D, [D, B]]]]} = \tilde{\cB}_{\alpha,\beta,\gamma}
\]
can be evaluated at the same cost as $\cB_{\alpha h}$.
Along the lines of the modified squarings, we have derived the following compositions which require only one exponentials $\tilde{B}$ at a fixed number of products. 
The substitution $Y_s\to\tilde{Y}_s$ indicates the replacement of $B$ by $\tilde{B}$.
\paragraph{$s=0$} Strang's method can be made into a (6,2) scheme with
\beq\label{eq:strang62}
	\tilde{Y}_0 = \cD_{h/2} \tilde{\cB}_{1, 1/24, 1/1920}\cD_{h/2}.
\eeq
We stress that, in principle, a method of order $(2n,2)$ can be constructed using only a single exponential, however, at the expense of increasingly complicated commutators, $[D,[D,[\ldots,[D,B]]\cdots]$ whose computational complexity cannot be neglected anymore.
\paragraph{$s=1$} 
Replacing $\cB_{h/2}$ by $\tilde{\cB}$ in \eqref{eq:42ABA}, we obtain the (6,4) method
\beq
		\tilde{Y}_1 = \cD_{ha_2}\tilde{\cB}\cD_{ha_1} \tilde{\cB}\cD_{ha_2},
\eeq
where $a_2=1/6, \ a_1=2/3$ and $\tilde{\cB}_{1/2, -1/144, 121/311040}$ with unchanged effort $\cost(\cB_{h/2})+\cost(\cB \cB)$.
\paragraph{$s=2$} 
Using one additional multiplication, we reach $\tilde{Y}_2$,
%\beq
		%\tilde{Y}_2 = \cD_{ha_3}(\tilde{\cB}\cD_{ha_2}\tilde{\cB}) \cD_{ha_1} (\tilde{\cB}\cD_{ha_2}\tilde{\cB}) \cD_{ha_3},
%\eeq
which can be tuned to be of order (8,4) or (6,4) while minimizing the error at $\mathcal{O}(\varepsilon^2 h^5)$, see Table~\ref{tab.coefs}.

We have also analyzed other classes of splitting and composition methods. The methods obtained showed  a worst performance on the numerical examples tested in this work. The schemes obtained are, however, collected in the appendix for completeness.

\section{Error analysis}
Our methods have proven successful for a low to medium accuracy since the high-order Pad\'e methods are hard to beat at round-off precision.
In a first step, we derive new scaling estimates for Pad\'e methods for lower precision requirements following \cite{higham09tsa}.
Let $\theta_m(u)$ be the largest value of $\|A\|$ s.t. the Pad\'e scheme $r_{2m}$ has precision at least $u$, i.e.,
\[
	\forall A, \|A\|\leq \theta_m \ :\  r_{2m}(A)=e^{A+E}, \ \text{s.t.}\ \|E\|\leq u.
\]	
The new $\theta_m$ are given in Table~\ref{tab.theta}.
\begin{table}\centering\footnotesize
\caption{\label{tab.theta}Theta values for diagonal Pad\'e of order $2m$ with minimum number of products. 
The numbers highlighted in boldface correspond to the minimal cost $\pi_{2m}-\log_2(\theta_{2m})$ }
%% HIDE COLUMNS
\newcolumntype{H}{@{}>{\lrbox0}l<{\endlrbox}}
\newcolumntype{D}{>{$}r<{$}}
\begin{tabular}{*{8}{D}*{5}{H}D}
\toprule
	u\backslash m	&1				&  			2&		  3 &  		  4&		   5&  		  6&       7&   8&   9&  10&  11&  12&  13\\ \midrule
\leq2^{-53}&
				3.65\en8&5.32\en4&1.50\en2&8.54\en2&2.54\en1&5.41\en1&9.50\en1&1.47&2.10&2.81&3.60&4.46&\bf 5.37\\
\leq1\en10 & 
				3.46\en5&1.64\en2&1.47\en1&4.73\en1&9.98\en1&1.69		 &\bf 2.51&3.44&4.44&5.51&6.62&7.76&8.94\\
\leq1\en6& 
				3.46\en3&1.64\en1&6.80\en1&1.49		 &\bf 2.48&3.58		 &4.76		&5.98&7.24&8.52&9.81&1.11\ep1&1.24\ep1\\
\bottomrule

\end{tabular}
\end{table}
It is clear that the number of necessary scalings for a sought precision is $s=\lceil{\log_2(\|A\|/\theta_m)\rceil}\in\mathbb{N}_0$ and taking into account the number of multiplications $\pi_m$ needed with each method, a global minimum $s+\pi_m$ can be found at each precision.

We will focus our attention on the medium precision range $u\leq 10^{-6}$, where the 10th order method $r_{10}$ is optimal among the Pad\'e schemes.
In analogy to the error control for Pad\'e methods, we discuss the backward error of the previously obtained splitting methods.
The BCH formula, in the form \eqref{eq.backward}, already gives us a series expansion of the remainder $E$,
\beq\label{eq.error}
E = \sum_{i=p+1}\sum_{j=1} h^i f_{i,j} \mathbf{C}_{i,j}.
\eeq
However, the expansion is difficult to compute for $i>15$ with exponentially growing effort in the symbolic computation.
Further complications arise from the nature of the expansion: it involves commutators $\mathbf{C}_{i,j}$ in $D,B$ which we have to estimate.
For most cases, the roughest (although sharp) estimate
\beq\label{eq.rough}
	\|[D,B]\|=\|DB-BD\|\leq 2\eps\|D\|^2, \qquad \eps = \|B\|/\|D\|,
\eeq
is way to loose to give accurate results.
Having in mind matrices with asymmetric spectra, i.e., small positive and large negative eigenvalues,
the following estimate is more useful \cite[Theorem 4]{kittaneh09nif},
%% Link;
%http://download.springer.com/static/pdf/830/bok%253A978-3-7643-8773-0.pdf?auth66=1401620578_a7a4d06a62230b2167bf87cfbef2b93a&ext=.pdf
\[
	\|[D,B]\|\leq \|B\| (d^+-d^-),
\] 
where the numerical range of $D$ (or easier: the eigenvalues) lies within $[d^-,d^+]$, which corresponds to a factor 2 gain in the estimate.
In any case, we can refine the estimate by recycling the calculations for the modified splittings, $[D,[D,B]]$, $[D,[D,[D,[D,B]]]]$ and intermediate steps, $[D,B]$, etc.
Then, we estimate the most relevant commutators, recalling the notation $[D^2,B]=[D,[D,B]]$,
\begin{align*}
	\|[B,[D,B]]\|&\leq 2\|[D,B]\| \|B\|, \\
	\|[B,[D,[D,[D,B]]]]\|&\leq 
	2 \| [D,B]\|\, \|[D,[D,B]]\| ,
		%\leq	2 \| B\| (d^+-d^-)\|[D^2,B]\| ,
		\\
	\|[D,[B,[D,[D,B]]]]\|&
	\leq 2 \| [D,B]\|\, \|[D,[D,B]]\|
	%\leq 2 \| B\| (d^+-d^-)^2\|[D^2,B]\|
	,\\
	\|[B,[B,[D,[D,B]]]]\|&\leq 4 \| B\|^2 \ \|[D,[D,B]]\|,\\
	\|[D,[D,[D,[D,[D,[D,B]]]]]]\|&\leq (d^+-d^-)^2\|[D,[D,[D,[D,B]]]]\|.
\end{align*}
The splitting methods studied in this work can be classified by their order and the leading error commutators are collected in Table~\ref{tab.split.commutators}. 

In principle, one could use the error terms at the next larger power in $h$ to estimate the quality of this truncation, but for practical purposes and $h\ll 1$, numerical experiments show that the simpler bounds are sufficient to get a reasonable recommendation for the number of squarings.
For illustration, we print the expansion \eqref{eq.error} for the method 
\eqref{eq:strang62} 
\begin{align}\label{eq.err62}
	E^{[6,2]}(h)\leq \tilde{E}^{[6,2]}=&\; 3.11\en{6} h^7 \|[D^6,B] \|
							 + 8.33\en{2} h^3 \|[B,[D,B]]\| \\ \nonumber
							&+  h^5 (1.39\en{3} \|[B,[D^3,B]]\| + 5.56\en{3} \|[[B,D],[D^2,B]]\|)\\ \nonumber
	& +  h^5 (5.56\en{3} \|[B^2,[D^2,B]]\| + 2.78\en{3} \|[[B,D],[B^2,D]]\|\\ \nonumber
	&+\mathcal{O}\left(\eps h^9 + \eps^2 h^7 + \eps^3 h^7\right)
\intertext{and for method $\tilde{Y}_2$ of order (6,4) from Table~\ref{tab.coefs},}
\label{eq.err64}
	E^{[6,4]}(h)\leq \tilde{E}^{[6,4]}=&\; 3.49\en{5} h^7 \|[D^6,B] \| 	\\ \nonumber 
	&	+  h^5 (1.70\en{3} \|[B,[D^3,B]]\| + 1.39\en{3} \|[[B,D],[D^2,B]]\|)\\ \nonumber 
	& +  h^5 (1.39\en{3} \|[B^2,[D^2,B]]\| + 4.63\en{4} \|[[B,D],[B^2,D]]\|\\ \nonumber 
	&+\mathcal{O}\left(\eps h^9 + \eps^2 h^7 + \eps^3 h^7\right) .
\end{align}
Then, the following algorithm suggests itself: 
Compute the commutators needed for the modified squarings, estimate their norms and finally evaluate the polynomials $\tilde{E}(h)$ to find an upper bound for $h$ such that the local error remains below given accuracy $u$. This $h$ translates directly to the number of external squarings $s_2=\lceil\log_2(h)\rceil$ and now, it only remains to sum the computational cost originating from the number of dense products and exponentials to find the overall most efficient method for a particular set of matrices $D,B$.
In contrast to the static Pad\'e case, where there is a single best method by just fixing the precision, this procedure is more flexible and chooses - at virtually no extra cost - the best method for the given matrix algebra structure.

Furthermore, we can establish a threshold for the size of the small parameter $\varepsilon$ in order to decide when splittings should be preferred over Pad\'e methods.
For example, let $u=10^{-6} (10^{-4})$ be the desired precision, we then know that $r_{10}$ ($r_{10}$) is optimal and the largest value the norm $\theta =\|A\|$ can take is $\theta_5=2.48 (\theta_5=3.85)$. 
Given that $r_{10}$ requires three multiplications, we use the splitting method $\tilde{Y}_0$ with three squarings to yield a method of the same computational cost. In \eqref{eq.err62}, this corresponds to taking $h=2^{-3}$.
Applying the roughest possible estimate \eqref{eq.rough} to $\tilde{E}^{[6,2]}(2^{-3})$, we obtain a polynomial in $\eps$ which takes values below $u$ for $\eps\leq 0.01 (0.05)$.
In practice, the norm estimates are sharper since we can use the commutators that have been computed in the algorithm and we expect an even larger threshold for $\eps$.

\begin{table}[!tbh]\caption{\label{tab.split.commutators}Leading error commutators at given order.
%, where $[D^kB]=[D,[D,[\dots,[D,B]]\cdots]]$ is a nested commutator with $k$ letters $D$.
}
\begin{tabular}{l*{3}{>{$}l<{$}}}
\toprule
order & \eps^1 & \eps^2 & \eps^3\\ \midrule
$(2n,2)$ & [D^{2n},B] & [B,[D,B]] & [B,[B,[D,[D,B]]]]\\
$(2n,4)$ & [D^{2n},B] & [B,[D,[D,[D,B]]]] , [D,[B,[D,[D,B]]]] & [B,[B,[D,[D,B]]]]\\
\bottomrule
\end{tabular}
\end{table}

\section{Numerical results}
In a couple of test scenarios, we attempt to provide an idea about when our new methods are superior to standard Pad\'e methods. 
In each setting, we define a different matrix $D$ which will be perturbed by a matrix $B$, s.t. 
\[
	B_{i,j}=k(i-j)/(i+j)
\]
and $k$ is chosen to satisfy $\varepsilon=\|B\|_1/\|D\|_1$ for the parameter set $\varepsilon=10^{-1}, 10^{-2}, 10^{-3}$. 
We measure the relative error in the 1-norm, $\| S_p^{[m]} - e^A\|_1/\|e^A\|_1$ for all methods where the exact solution is computed by a high-order Pad\'e method and all splittings use the second-order scheme $r_2$ to approximate the exponential $\exp({2^{-s}B})$.

\subsection{Rotations}
Letting 
\[
	D=i \diag\{-25,-24.5,\ldots ,24.5,25\}
\]
with $i=\sqrt{-1}$, the performance of Pad\'e methods of order 10 and 26, together with the 16th-order Taylor method using 6 products is studied. Fig.~\ref{fig.case1} shows the relative error (in logarithmic scale) versus cost (number of matrix--matrix multiplications) for different choices of the scaling parameter, $s$. The horizontal line shows the tolerance desired for the numerical experiments. 
It is evident that, as expected, the Pad\'e method $r_{10}$ is the most efficient among these standard schemes and will be used for reference in later experiments.
For illustration, Fig.~\ref{fig.case1} also includes two modified squaring methods without commutators ($Y_2$, order (6,2) and $Y_3$, order (6,4) from Table~\ref{tab.coefs}), both of which are more efficient than $r_{10}$ in the lower precision range. Notice that, since $A$ isa complex matrix, to use splitting methods with complex coefficients does not increase the cost of the algorithms in this case.
Furthermore, the standard methods are insensitive w.r.t. the small parameter $\varepsilon$, whereas the splitting methods improve as $\varepsilon$ decreases.
\begin{figure}[!ht]
\centering
	\pgfplotsset{every axis plot/.append style={line width=1.2pt, mark size=2pt},
		tick label style={font=\footnotesize},
		every axis/.append style={%
		minor x tick num=1,
		minor y tick num=4,
		minor z tick num = 2,
		%extra x ticks={3e-2,6e-2}
		scale only axis, % apply width/height only to axis
		font=\footnotesize
		}
	}
	\setlength\figurewidth{.24\textwidth}%
	\setlength\figureheight{.4\textwidth}
	\tikzsetnextfilename{figs/case1a}
	\includegraphics{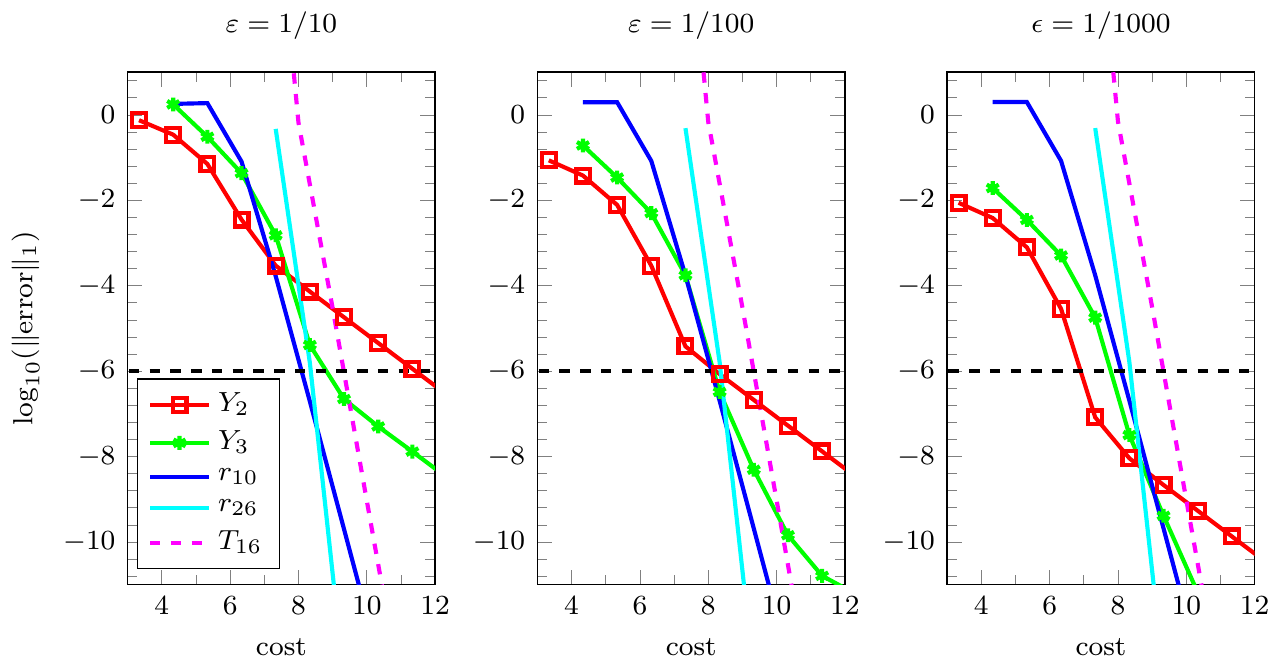}
	\caption{\label{fig.case1} Relative error  (in logarithmic scale)  versus computational cost given by the number of dense matrix-matrix products for the standard Pad\'e and Taylor methods $r_{10}, r_{26}, T_{16}$, and the splitting methods $Y_2$ and $Y_3$ of order (6,2) and (6,4), respectively, without commutators from Table~\ref{tab.coefs}. }
\end{figure}
In a second experiment in Fig.~\ref{fig.case1b}, we use the same matrices as before but choose the most efficient splitting methods with commutators, $\tilde{Y}_0$ and $\tilde{Y}_1$. 
Using the local error estimates in \eqref{eq.err62} and \eqref{eq.err64}, we indicate the point which corresponds to the optimal number of squarings for the splitting methods and compare it with the recommended squaring parameter for Pad\'e $r_{10}$. 
For a relatively large parameter $\varepsilon$ in the left panel of Fig.~\ref{fig.case1b}, the method $r_{10}$ is still superior but is already equaled in terms of computational cost for a smaller perturbation in the center plot, but at higher accuracy. As $\varepsilon$ becomes smaller in the right panel, we achieve higher accuracy at lower computational cost, saving one product for $\tilde{Y}_1$ and two products for $\tilde{Y}_2$, respectively.

\begin{figure}[!ht]
\centering
	\pgfplotsset{every axis plot/.append style={line width=1.2pt, mark size=2pt},
		tick label style={font=\footnotesize},
		every axis/.append style={%
		minor x tick num=1,
		minor y tick num=4,
		minor z tick num = 2,
		%extra x ticks={3e-2,6e-2}
		scale only axis, % apply width/height only to axis
		font=\footnotesize
		}
	}
	\setlength\figurewidth{.24\textwidth}%
	\setlength\figureheight{.4\textwidth}
	\tikzsetnextfilename{figs/case1b}
	\includegraphics{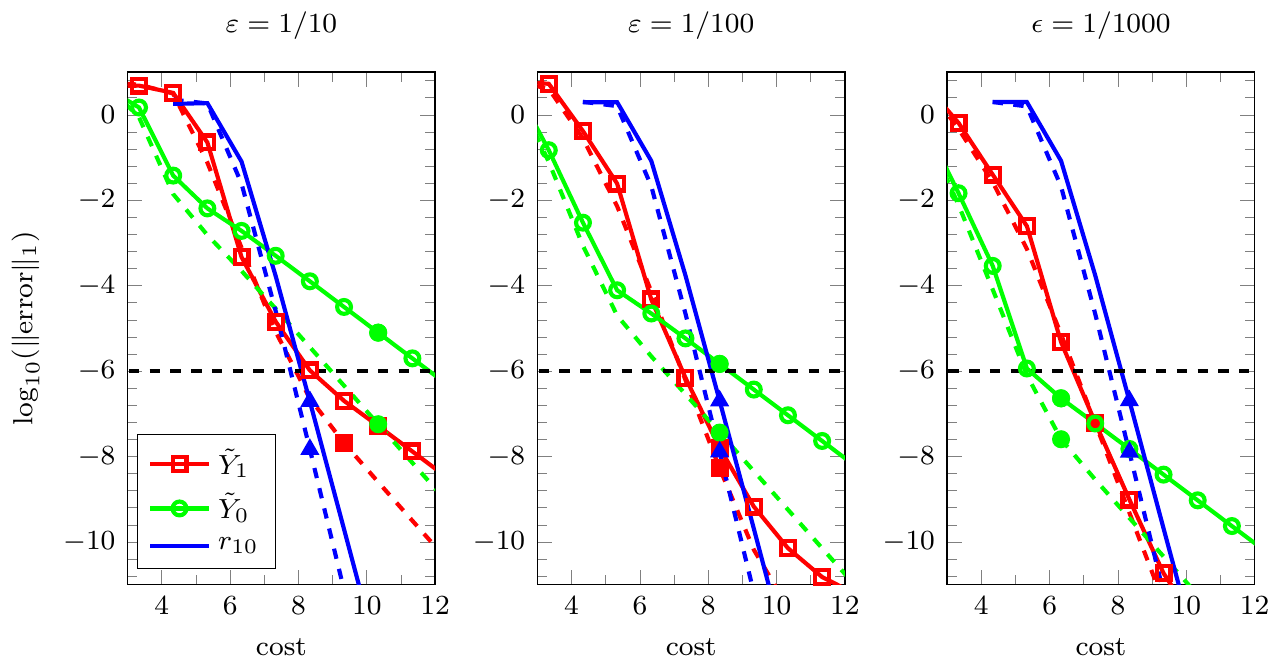}
	\caption{\label{fig.case1b} The solid lines show the relative global error $e^{A}$ after squaring versus the overall computational cost and the dashed curves depict the relative local error in $e^{2^{-s}A}$ (before squaring) which is used for the error estimate, both for Pad\'e and the splittings. 
	The filled markers indicate the position of the recommended (automatic) algorithm.
}
\end{figure}

In the next plot, Fig.~\ref{fig.case1c}, we increase the norm of the matrix and set $D_2=100 D$, and $B$ is scaled accordingly to maintain the quotient $\|B\|_1/\|D_2\|_1=\varepsilon$.
The implications are a substantial increase in the number of necessary squarings with prior scaling and corresponds to a long-time integration in which we observe the favorable behavior expected from Fig.~\ref{fig.longtime}.
The gain with respect to Pad\'e's method is striking as $\varepsilon$ decreases.

\begin{figure}[!ht]
\centering
	\pgfplotsset{every axis plot/.append style={line width=1.2pt, mark size=2pt},
		tick label style={font=\footnotesize},
		every axis/.append style={%
		minor x tick num=1,
		minor y tick num=4,
		minor z tick num = 2,
		%extra x ticks={3e-2,6e-2}
		scale only axis, % apply width/height only to axis
		font=\footnotesize
		}
	}
	\setlength\figurewidth{.24\textwidth}%
	\setlength\figureheight{.4\textwidth}
	\tikzsetnextfilename{figs/case1c}
	\includegraphics{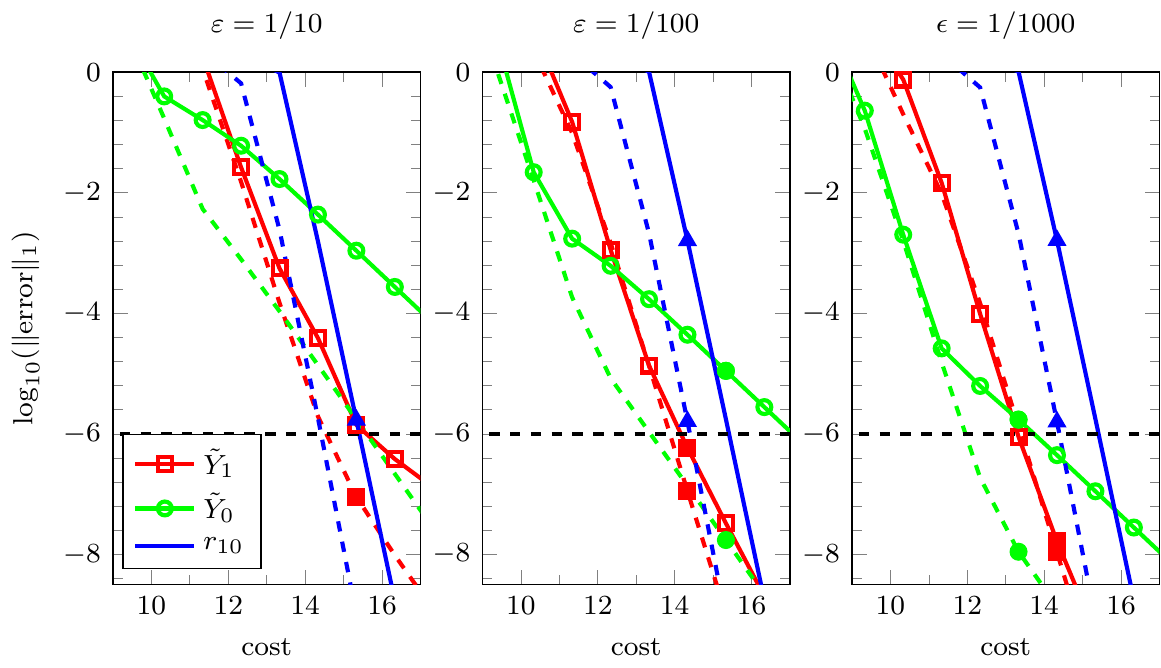}
	\caption{\label{fig.case1c} Same as Fig.~\ref{fig.case1b} for an exponential of a large norm matrix, with diagonal part $D_2=100D$.
}
\end{figure}
	
\subsection{Dissipation}
A less favorable problem for our algorithm is given using a stiff matrix with large positive and negative eigenvalues, 
\[
D=\diag\{15,14.5,\ldots, -14.5,-15\}.
\]
The perturbation $B$ is scaled as before to $\|B\|/\|D\|=\varepsilon$. Fig.~\ref{fig.case1} shos the results obtained.
Again, our methods perform well for low accuracies for not too large perturbations and improve as $\varepsilon$ becomes smaller.
\begin{figure}[!ht]
\centering
	%\ifpgf
	\pgfplotsset{every axis plot/.append style={line width=1.2pt, mark size=2pt},
		tick label style={font=\footnotesize},
		every axis/.append style={%
		minor x tick num=1,
		minor y tick num=4,
		minor z tick num = 2,
		%extra x ticks={3e-2,6e-2}
		scale only axis, % apply width/height only to axis
		font=\footnotesize
		}
	}
	\setlength\figurewidth{.24\textwidth}%
	\setlength\figureheight{.4\textwidth}
	\tikzsetnextfilename{figs/case2}
	\includegraphics{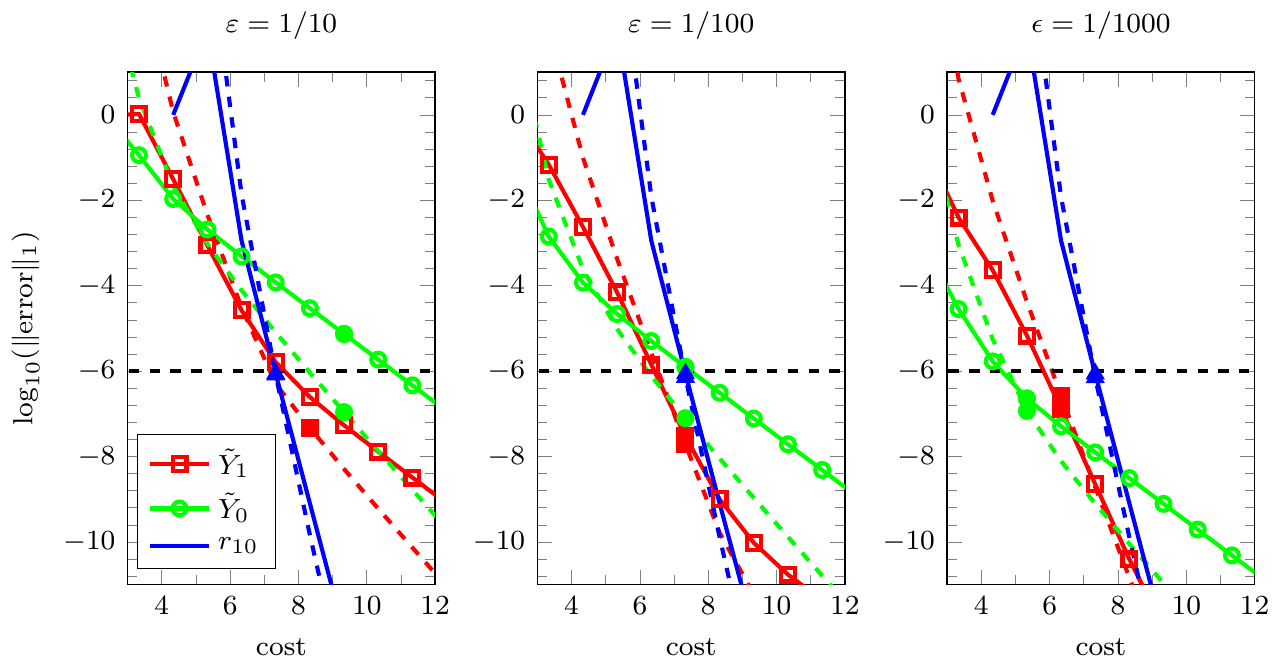}
	%\includegraphics[width=\textwidth]{figs/case2}
	%\else
%	\fbox{\includegraphics[width=\textwidth]{figs/longtime_final}}
	%\includegraphics[width=\textwidth]{figs/longtime4}
	%\fi
\caption{\label{fig.case2}  Same as Fig.~\ref{fig.case1b} but for the stiff matrix case $D=\diag\{15,14.5,\ldots, -14.5,-15\}$.  }
\end{figure}

\section{Conclusions}

We have proposed a new recursive algorithm based on splitting methods for the computation of the exponential of perturbed matrices which can be written as the sum $A=D+\varepsilon B$ of a sparse and efficiently exponentiable matrix $D$ with sparse exponential $e^D$ and a dense matrix $\varepsilon B$ which is of small norm in comparison with $D$. 
We have considered the scaling and squaring technique but replacing the  Pad\'e or Taylor methods to compute the exponential of the scaled matrix by an appropriate splitting methods tailored for this class of matrices. We have proposed a recursive algorithm which allows to save computational cost and still leaves some free parameters for optimization. An important feature of splitting methods for perturbed problems is that the error is a sum of a local error of order ${\cal O}(\varepsilon)$ plus a global error of order  ${\cal O}(\varepsilon^2)$ and this allows to build new methods with high performance when low to medium accuracy is desired. 
The new schemes are built taking into account that the dominant computational cost arises from the computation of dense matrix products and we present a modified squaring which takes advantage of the smallness of the perturbed matrix $B$ in order to reduce the number of squarings necessary.
The recursive character of the modified squarings implies only light memory requirements.
Theoretical results on local error and error propagation for splitting methods are complemented with numerical experiments and show a clear improvement over existing and highly optimized Pad\'e methods when low to medium precision is sought.

%We have derived a highly efficient method for the computation of the matrix exponential based on the introduction of a new class of splitting methods. 
%\begin{itemize}
%	\item when applicable: special class of matrices
%	\item how: reusing blocks of computation, commutators
%	\item how2: mention theoretical analysis
%	\item result: better for small pert. and low/medium precision
%		[either absolutely or in terms of scaling recommendations since pade overestimates effort]
%\end{itemize}
%The methods are superior to the de facto standard Pad\'e methods for perturbed matrices.

\Appendix

\section{Further approaches}
In this subsection, we collect results on approaches that are successful in the context of splittings for ordinary differential equations, however, have been found less efficient on the numerical experiments than the methods presented before.
\subsection{On processing}\label{sec.processing}
A basic property of the adjoint action,
$$
	e^{P}Ye^{-P} = e^{\mathrm{ad}_P}Y = Y + [P, Y] + \frac12 [P, [P, Y]] + \cdots
$$
together with the cheap computability of the commutator $[D,B]=DB-BD$ motivates the use of \defn{processing techniques}, well-known for the numerical integration of differential equations, to eliminate error terms.
The idea is now based on the observation that $(XYX^{-1})^N = XY^NX^{-1}$ and essentially corresponds to a change of basis in which the error propagation (recall that large $s$ can be regarded as a (long-) time integration using a small time-step $h=1/2^s$) is expected to be less severe.

The modified Strang algorithm \eqref{eq:strang62} has leading error proportional to
\[
	[B,[D,B]], \quad [B,[D,[D,[D,B]]]], \quad [D,[D,[B,[D,B]]]].
\]
The first two of which can be eliminated using a processor with $P=\alpha [D,B] + \beta [D,[D,[D,B]]]$, thus motivating the ansatz
\[
	e^{\alpha \eps h^2 [D,B] + \beta \eps  h^4 [D,[D,[D,B]]]} \tilde{Y}_{s} e^{-\alpha \eps h^2 [D,B]  - \beta \eps h^4 [D,[D,[D,B]]]}.
\]
The norm of the outer exponents is small and a low order Pad\'e approximation, say $r_2(P)$, usually provides sufficient accuracy.
Therefore, at the expense of one exponential, one multiplication and one inversion (which is performed together with the multiplication, as for the Pad\'e methods, $(\cB\cD)\tilde{\cB}^{-1}$), we get two free parameters, $\alpha,\beta$.
Using the kernel $\tilde{Y}_0$, we reach order (6,4), whereas $\tilde{Y}_1$ is sufficient for order (10,4) and (6,6,4), see Table~\ref{tab.coefs2}.
	%\item (8,4) with free parameter $a_1$, tunable to minimize $\varepsilon^2 h^5$ ( ), or
	%to reach (10,4) [10 solutions, 2 of which complex]
	%\item (6,6,4), with error of size $\cO(10^{-5}\varepsilon h^7 + \varepsilon^2 h^7 + \varepsilon^3 h^5)$ for $a_1=1/3-\sqrt{5}/30$.
%\end{itemize}

\subsection{More exponentials}
For problems where complex coefficients $a_j$ lead to a substantial increase in computational complexity (e.g., when $A,B\in\R^{n\times n}$) or matrix commutators are not desirable, it could be advantageous to allow negative values for some $b_j$.

A first example is the four-stage method
\beq\label{eq.neg4}
%	\Psi_{4} 
	S^{[4]}_4 = \cD_{ha_1} \cB_{hb_1} \cD_{ha_2} \cB_{hb_2} \cD_{ha_2} \cB_{hb_1} \cD_{ha_1}.
\eeq
This scheme requires two exponentials, two products and has two free parameters which can produce a fourth-order method with real coefficients $a_j,b_j$, known as triple jump 
  \cite{creutz89hhm,suzuki90fdo,yoshida90coh},
	see Table~\ref{tab.coefs2}.

Another product is necessary to compute the six-stage composition
\[%\beq\label{eq.neg6}
%	\Psi_{6}
	S^{[6]}_{(6,4)}= \cD_{ha_1} (\cB_{hb_1} \cD_{ha_2} \cB_{hb_1})\cD_{ha_3}\cB_{hb_2} \cD_{ha_3} (\cB_{hb_1} \cD_{ha_2}\cB_{hb_1})\cD_{ha_1}.
\]%\eeq
Three free parameters are sufficient to construct (6,4) methods, however, with complex time-steps.
The real-valued fourth-order method minimizing the error at $\mathcal{O}(\varepsilon h^5)$ can be found in Table~\ref{tab.coefs2}.
An additional stage with a grouping similar to the modified splittings,
\[%\beq\label{eq.neg6}
%	\Psi_{6}
	S^{[7]}_{(6,4)}= \cD_{ha_1} (\cB_{hb_1} \cD_{ha_2} \cB_{hb_2}\cD_{ha_2}\cB_{hb_1}) \cD_{ha_3} 
											(\cB_{hb_1} \cD_{ha_2} \cB_{hb_2}\cD_{ha_2}\cB_{hb_1})
					 \cD_{ha_1},
\]
requires the same number of products but has real solutions of order (6,4). Among the four real-valued solutions, the one minimizing the error at $\mathcal{O}(\varepsilon h^7)$ is printed in Table~\ref{tab.coefs2}.
We have found that supposedly clever re-utilization of exponentials by setting $b_j$ to be a rational multiple of an already computed exponent $b_k$ are not competitive since - at its very best - one can save the computation of an exponential at the cost of an inversion ($b_j=-b_k$) or a matrix product ($b_j=2b_k$), however, the direct use of the sufficiently accurate $r_2$ Pad\'e method needs only one inversion.

\subsection{Splitting for low-order Pad\'e}
Technically, the stated splitting orders assume the exact computation of all exponentials, but in practice, the cheap underlying Pad\'e scheme $r_2$ has accuracy limit $\cO(\eps^3h^3)$.
Since we assumed $\eps$ to be a small parameter, comparable to $h^2$, it could be regarded as $\cO(\eps h^7)$. 
Instead of switching to the more precise $r_4$ method ($\cO(\eps^5 h^5)$) for the exponential $\cB$, 
(using $r_2$ for the processor has error $\cO(h^6 \eps^3)$ and is therefore sufficient),
we attempt to use a free parameter to decrease the $r_2$-related error in $\cB$ to $h^5\varepsilon^5$.

The procedure is based on the observation that the approximant $r_2(h\eps B)$ can be expressed as a single exponential 
$$
	r_2(h\eps B) = e^{h\eps B + h^3 \eps^3 C + \cO(h^5 \eps^5)}
$$
for some matrix $C$. 
Notice that the exponent can be expanded in odd powers of $h$ since $r_2$ is symmetric.
Now, we simply add the (unknown) matrix $C$ to the algebra and in addition to the previous order conditions, we have to solve $\sum_{i=1}^m b_i^3 = 0$.
It is clear that condition $b_i=1/m$ has to be dropped and at least three exponentials $\cB_{b_jh}$ are necessary.
We embark by modifying \eqref{eq.neg4} to 
\beq\label{eq.neg4mod}
	\Psi^{[4,mod]} = \cD_{ha_1} \tilde{\cB}_1 \cD_{ha_2} \tilde{\cB}_2 \cD_{ha_2} \tilde{\cB}_1 \cD_{ha_1}.
\eeq
Using two exponentials (inversions) and two multiplications, we have six free parameters and only one additional equation. 
%Thus, we can solve
%$C=0$ (with $b_1$), $[A,A,B]=0$ (with $c_1$), $[A,A,A,A,B]=0$ (with $d_1$)
%and use
%$a_1$ for $[B,A,B]$. $[A,A,A,A,A,A,B]$ (with $c_2$) and $[A,A,A,A,A,A,A,A,B]$ ($d_2$)
The freedom in the parameters allows to construct real-coefficient methods of order (10,4)
and alternatively, at order (8,4), a method minimizing the squared error polynomials $e_{5,2}$ at $\eps^2 h^5$, see Table~\ref{tab.coefs2}.

%	From here on: two exponentials
\begin{table}[!ht]
\caption{\label{tab.coefs2}Further splitting methods, including several exponentials and processing techniques.}
\noindent{\footnotesize
\begin{tabular}{ll}
\toprule
$S^{[4]}$, 4 stages, order 4  & 2 exp, 2 prod\\[1mm]%$\cost(\cB_{b_1h})+\cost(\cB_{b_2h}) + 2\cost(\cB \cB)$\\
$a_1= \frac16 (2 + 1/2^{1/3} + 2^{1/3})$, $b_1= \frac13 (2 + 1/2^{1/3} + 2^{1/3})$ & 2 complex sol. omitted
\\[1mm]
$S^{[6]}$, 6 stages, order 4   & 2 exp, 3 prod\\[1mm]
 											%best solution [overall squared error at $h^5,h^7$ and at $\eps^2$] 
												%[11 reals, 82 complex ommitted]\\ 
$a_1=0.19731107566242791631$,& [minimizes $\mathcal{O}(\varepsilon h^5)$]\\
$a_2=0.38252646594731312955$, \\												
$a_3=(1-2a_1-2a_2)=-0.079837541609741045862$,\\ 
$b_1=0.42519341909910345071$,\\ $b_2=1-4b_1=-0.70077367639641380284$.
\\[1mm]
$S^{[7]}$, 7 stages, order (6,4)  & 2 exp, 3 prod\\[1mm]
 											%best solution [overall squared error at $h^5,h^7$ and at $\eps^2$] 
												%[11 reals, 82 complex ommitted]\\ 
$a_1=0.35937529621978708941$, &{ [minimizes $\mathcal{O}(\varepsilon h^7)$]}\\
$a_2=-0.098379231055234835826$, \\												
$a_3=(1-2a_1-4a_2)=0.67476633178136516448$,\\ 
$b_1=0.67702963544760500586$,\\ $b_2=1/2-2b_1=-0.85405927089521001173$.\\
\midrule\midrule
Processed $e^{xh^2[D,B]+yh^4[D,[D,[D,B]]]}\tilde{Y}_0e^{-xh^2[D,B]-yh^[D,[D,[D,B]]]}$ & 2 exp, 1 prod, 1 inv\\[1mm]
Order (6,4)\\[1mm]
$a_1=1/2$,
$\beta=-1/24$, $\gamma=31/5760$
$x=-1/12$, $y=1/120$.\\
\midrule
Processed $e^{xh^2[D,B]+yh^4[D,[D,[D,B]]]}\tilde{Y}_1e^{-xh^2[D,B]-yh^4[D,[D,[D,B]]]}$ & 2 exp, 1 prod, 1 inv\\[1mm]
Order (6,6,4)\\[1mm]
$a_2=0.2587977340833403434530275$, \\
$\beta=-0.005227683364583625421653925$, \\
$\gamma=0.0000329546718228203782$,\\
$x=-0.02303276685416841919659022$,\\
$y=0.0007499977372301362425777840$.\\[1mm]
Order (10,4)\\[1mm]
$a_2=0.250225501288894385213924$,\\
$\beta=-0.0052083460460411565905784$,\\
$\gamma=0.0000329546718228203782$,\\
$x=-0.0208897086555569296368143$, \\
$y=0.0000573371861339342917744$\\
\midrule\midrule
$\cD_{ha_1} \tilde{\cB}_1 \cD_{ha_2} \tilde{\cB}_2 \cD_{ha_2} \tilde{\cB}_1 \cD_{ha_1}$ & 2 exp, 2 prod\\[1mm]
real (10,4) based on $r_2$, & \\
$d_2 = -0.0017987433839305087766, c_2 = -0.14389703981903926044$,\\
$d_1 = 0.000039345117326816272608, c_1 = -0.0079989398412468330564$,\\
$b_2 = -0.58268652153120735848, a_2 = 0.50468619989723192191$\\[1mm]
(8,4) minimizing $e_{5,2}$,\\ 
$d_2 = 0.009460956758445480826, c_2 = -0.03780196888453765108$, \\
$d_1 = 0.0011653151315644152329,c_1 = -0.061046475308497637733$,\\
$b_2 = -0.58268652153120735848, a_2 = 0.50468619989723192191$\\
\bottomrule
\end{tabular}
}
\end{table}

\end{document}

\subsubsection{Preservation of qualitative properties and error propagation by squaring}
In addition to the simple computational structure, Pad\'e methods possess favorable geometric properties.
Recall that diagonal Pad\'{e}-approximants map the Lie algebra $\mathrm{o}_{J}(n)$ into the so-called $J$-orthogonal Lie group $\mathrm{O}_{J}(n)$, 
\begin{align}  \label{j-algebra}
\mathrm{o}_J(n) &= \{ B \in \mathfrak{gl}_n (\mathbb{R}) \, : \,
B^T J + J B = 0 \},
\\  \label{j-ortho}
\mathrm{O}_J &= \{ A \in \mathrm{GL}(n) \, : \, A^T J A = J \},
\end{align}
where $\mathfrak{gl}_n (\mathbb{R})$ is the Lie algebra of all $n\times n$ real matrices and $\mathrm{GL}(n)$ is the group of all $n \times n$ non-singular real matrices with some constant matrix $J\in\mathrm{GL}(n)$.

%
%They thus constitute a valid alternative to the evaluation of the exponential matrix for this particular Lie group. More specifically, if $A\in \mathrm{o}_{J}(n)$, then $r_{2m}(A)\in\mathrm{O}_{J}(n)$.
%
Particular examples are the orthogonal group which is recovered for $J=\id$, or the symplectic group $\mathrm{Sp}(n)$ when $J$ is the symplectic matrix.

The previous Taylor and Pad\'e methods can safely be used when they provide nearly round-off accuracy. 
However, as mentioned, in most cases, such high accuracy is not necessary but if the approximation is not very accurate then it is important to have some information on the error propagation in the squaring process, and the preservation of the qualitative structure of the solution can be advantageous.